%% file: ex_article.tex
\documentclass[final,hidelinks,onefignum,onetabnum]{siamart251216}

\input{ex_shared}

\ifpdf
\hypersetup{
  pdftitle={Lifting-Free Quadratic Sum-Of-Squares Programming},
  pdfauthor={Gabriel F. Machado, Ross Drummond, and Morgan Jones}
}
\fi

\begin{document}

\maketitle

\begin{abstract}
Quadratic Sum-Of-Squares (QSOS) optimization problems appear in system identification and machine learning, but standard Schur-complement and second-order cone liftings enlarge conic dimensions and create computational bottlenecks for interior-point methods. This paper introduces a lifting-free regularization that preserves the original conic structure by adding a norm penalty to SOS variables, yielding closed-form primal updates and an unconstrained, concave dual with Lipschitz-continuous gradient. Accelerated first-order methods efficiently maximize this dual, and convergence analysis shows non-asymptotic recovery of the solution. Numerical experiments on constrained regression problems show the proposed method can be  40\% faster than existing solvers such as SCS and handle larger problems than MOSEK, with memory scaling only in the number of equality constraints.
\end{abstract}

\begin{keywords}
Sum-Of-Squares optimization, quadratic conic programming, first-order methods, regularization, semidefinite programming.
\end{keywords}

\begin{MSCcodes}
90C22, 90C25, 65K05
\end{MSCcodes}


\section{Introduction}
\label{sec:intro}

Sum-Of-Squares (SOS) optimization \cite{parrilo2000structured,laurent2008sums,blekherman2012} is at the heart of modern polynomial optimization, with applications including control theory \cite{luppi2023data,bramburger2024synthesizing}, system identification \cite{hopkins2017power,meng2020application} and machine learning \cite{d2023higher,bach2024theory,azuma2025rank}. A standard SOS program minimizes a \emph{linear} function subject to constraints on polynomials being sum-of-squares. However, applications such as constrained regression and polynomial system identification \cite{ahmadi2023,machado2024sparse}  naturally involve the minimization of \emph{quadratic} objective functions. This gives rise to the following class of Quadratic Sum-Of-Squares (QSOS) programs,
\begin{equation}
\begin{aligned}
    \min_{\bm{\xi}} \quad &
    \tfrac{1}{2}\bm{\xi}_{\textnormal{f}}^{\top} \bm{Q} \bm{\xi}_{\textnormal{f}}
    + \bm{w}^{\top} \bm{\xi} \\
    \text{s.t.} \quad &
    q_{0,j}(\bm{x}) + \sum_{i=1}^{r} p_{i}(\bm{x}) q_{i,j}(\bm{x}) = 0,
    \quad j=1,\dots,m_{\textnormal{eq}}, \\
    &
    t_{0,j}(\bm{x}) + \sum_{i=1}^{r} p_{i}(\bm{x}) t_{i,j}(\bm{x})
    \ \text{is sum-of-squares}, \quad j=1,\dots,m_{\textnormal{sos}}, \\
    &
    p_{i}(\bm{x})\ \text{is sum-of-squares}, \quad i=\hat{r}+1,\dots,r,
\end{aligned}
\tag{QSOS}\label{opt:QSOS}
\end{equation}
where $\bm{x} \in \mathbb{R}^{n_x}$, $\bm{\xi} =
\begin{bmatrix}
    \bm{\xi}_{\textnormal{f}}^{\top} &
    \bm{\xi}_{\textnormal{sos}}^{\top}
\end{bmatrix}^{\top}
\in \mathbb{R}^{n}$ is the vector formed from the coefficients
$\bm{\xi}_{\textnormal{f}}~\in~\mathbb{R}^{n_{\textnormal{f}}}$ of polynomials
$p_{i}(\bm{x})$ for $i=1,\dots,\hat{r}$ and of coefficients
$\bm{\xi}_{\textnormal{sos}} \in \mathbb{R}^{n-n_{\textnormal{f}}}$ of
sum-of-squares polynomials $p_{i}(\bm{x})$ for $i=\hat{r}+1,\dots,r$,
$q_{i,j}(\bm{x})$ and $t_{i,j}(\bm{x})$ are given polynomials with constant
scalar coefficients. In this formulation, $\bm{w}$ is a $n$-dimensional vector,
$\bm{Q}$ is a symmetric $n_{\textnormal{f}} \times n_{\textnormal{f}}$ matrix,
and both represent the weighting factors of the quadratic objective function.

In SOS optimization, a linear SOS program is recovered when $\bm{Q} = \bm{0}$, and this can be parsed into a Semidefinite Programming (SDP) problem
\cite{lofberg2009pre,blekherman2012,jagt2022efficient}, a class of
linear Conic Programming (CP) problems with many important applications
\cite{wolkowicz2012handbook,majumdar2020recent}. Many methods have now been
proposed to solve SDP problems; including (i) path-following (barrier function)
methods \cite{nesterov1997self,nesterov1998primal} and (ii) homogeneous self-dual embedding (HSDE) \cite{ye1994nl}, commonly implemented using interior-point methods (IPMs)
\cite{vandenberghe1996semidefinite,helmberg1996interior,wright1997primal,alizadeh1998primal,nemirovski2008interior}. Whilst widely adopted, the computational time
and memory complexity of IPMs scale poorly with the number of decision variables
and constraints, as forming and factorizing large (and possibly dense) matrix
variables can be numerically expensive \cite{majumdar2020recent}.

First-order solvers based on augmented Lagrangian methods have been proposed for large SDP problems (see Section~12.2 in \cite{fletcher2013practical}). Methods such as the Alternating Direction Method of Multipliers (ADMM) \cite{glowinski1975approximation,gabay1976dual,gabay1983chapter} decompose the SDP into simpler subproblems, and are widely used when solving SDP problems with first-order methods \cite{wen2010alternating,o2016conic}. Unlike IPMs, first-order methods may not require significant computational effort and storage per iteration, allowing them to solve large problems faster, though potentially yielding inaccurate solutions \cite{parikh2014proximal}.

Quadratic Sum-Of-Squares (QSOS) programs can be written as Quadratic Conic Programming (QCP) problems with convex quadratic objectives and feasible sets defined by affine constraints and positive semidefinite cones. One way of solving such QCP problems involves a reduction to a conic optimization problem with a linear objective and a lifted constraint space. This is done by eliminating the quadratic objective and including a quadratic inequality constraint, transformed into either a second-order cone constraint or a semidefinite constraint. Both lifting approaches increase the decision variable space and introduce additional conic constraints. In high-dimensional polynomial optimization problems, where polynomial variables have large degree in many variables, the number of decision variables grows combinatorially. Consequently, second-order cone constraints increase the size of linear systems solved at each iteration, and large positive semidefinite constraints require computationally expensive factorizations. These limitations motivate an alternative method that solves QCP problems directly, without increasing the cone dimension or incurring the cost of involved matrix operations.

Several specialized solvers, including COSMO \cite{garstka2021cosmo}, SCS \cite{o2021operator}, and Clarabel \cite{goulart2024clarabel}, can directly handle quadratic objectives without reducing to a linear CP. COSMO extends operator splitting methods \cite{stellato2020osqp} and applies ADMM \cite{neal2011distributed}. Clarabel builds upon solving a specialized HSDE \cite{andersen1999homogeneous} with an IPM, and SCS also uses HSDE, but with ADMM. For large problems with positive semidefinite constraints, their speed-ups rely not only on numerical methods but also on exploiting structure or sparsity patterns \cite{sun2014decomposition,zheng2017exploiting,zheng2020chordal}. These solvers inherit structural limitations that become pronounced in large SOS problems, such as lifted spaces due to the addition of slack variables (see \cref{opt:standard-QCP}). COSMO relies on operator splitting and requires carefully engineered decompositions, while Clarabel and SCS depend on solving a HSDE whose size grows with the conic structure. All three methods repeatedly apply PSD projections or factorization-based updates that scale poorly with the number and dimension of SOS constraints. Their performance is highly sensitive to problem conditioning and sparsity patterns, and they may fail or become prohibitively slow on dense or high-dimensional SOS instances. These limitations motivate the need for a lifting-free approach that avoids augmented Lagrangians, slack variables, and large conic embeddings, and instead provides closed-form primal solutions together with non-asymptotic convergence guarantees.

\textbf{Main Contributions.} This paper presents a novel method for solving quadratic SOS programs that owns:
\begin{enumerate}
    \item Lifting-free regularization preserving conic structure and yielding closed-form primal solutions (see \Cref{sec:theory});
    \item Explicit non-asymptotic convergence guarantees (\cref{thm:suboptimality-fixed-rho}) using Nesterov’s accelerated gradient ascent (\cref{alg:first-order-solver});
    \item Practical scalability and reliability demonstrated on constrained regression problems, outperforming SCS and MOSEK in failure rate, speed, and memory usage (see \Cref{sec:experiments}).
\end{enumerate}

\textbf{Related Work.} QCP problems appear in the nearest correlation matrix problem \cite{higham2002computing} and in Semidefinite Least-Squares (SDLS) problems \cite{malick2004dual}. Algorithm 3.3 in \cite{higham2002computing} can be interpreted as a simple dual gradient method, whereas Algorithm 1 in \cite{malick2004dual} uses a quasi-Newton optimizer. In \cite{henrion2011projection}, Algorithm 2.1 is a dual-based first-order method introducing the SDLS MATLAB package \cite{henrion2007sdls}, consisting of projection steps for conic feasibility problems such as SOS decompositions. These works compare Schur-complement and second-order cone liftings and report that both approaches can be inferior due to dimensional growth. Higham’s analysis is performed on a problem already in regularized form, whereas our method applies regularization directly to general programs in the form of \cref{opt:QSOS}. Our convexity and smoothness arguments build upon the dual-based analysis framework of \cite{malick2004dual}, adapted to \cref{opt:QSOS}. Feasibility-oriented SOS methods \cite{henrion2011projection} differ substantially: feasibility problems do not require handling regularization in the objective, and their quasi-Newton scheme contrasts with our use of accelerated gradient methods enabling non-asymptotic convergence guarantees.

Regularization methods for large-scale SDP problems stabilize the linear SDP problem with small perturbations in the data, typically adding quadratic regularization terms to the objective or constraints. Connections between primal Moreau-Yosida regularization and dual augmented Lagrangian methods appear in \cite{malick2009regularization}, and augmented primal–dual methods using regularization appear in \cite{florian2008augmented}. These approaches motivate our lifting-free regularization framework, which dualizes only affine constraints, avoids augmented Lagrangians, and yields closed-form primal updates together with non-asymptotic convergence guarantees.

\textbf{Paper Structure.} \Cref{sec:lifting} reviews standard reformulations of quadratic SOS programs and their scalability limitations. \Cref{sec:theory} introduces the regularized problem \cref{opt:Reg-QCP}, derives its dual, and presents \cref{alg:first-order-solver}. \Cref{sec:convergence} provides convergence analysis. \Cref{sec:experiments} presents numerical experiments. \Cref{sec:conclusion} concludes and outlines future work.

\textbf{Notation.} The $n$-dimensional Euclidean space is denoted by $\mathbb{R}^{n}$, and the set of $n\times n$ symmetric matrices by $\mathbb{S}^{n}$. The cone of positive semidefinite matrices is $\mathbb{S}_{+}^{n} \subset \mathbb{S}^{n}$. For a vector $\bm{v}\in\mathbb{R}^{n}$, $\|\bm{v}\|_{2}$ is the Euclidean norm and $\|\bm{v}\|_{\infty}$ the infinity norm. For a matrix $\bm{X}\in\mathbb{R}^{m\times n}$, $\|\bm{X}\|_{2}$ denotes the spectral norm, $\|\bm{X}\|_{\textnormal{F}}$ the Frobenius norm, and $\sigma_{\textnormal{min}}(\bm{X})$ denotes its smallest singular value. The operator $\operatorname{vec}(\bm{X})$ stacks the columns of $\bm{X}$ into a vector. For symmetric $\bm{X} \in \mathbb{S}^{n}$, $\operatorname{svec}(\bm{X})$ extracts the upper triangular part scaled by $\sqrt{2}$ for off-diagonal entries, and $\operatorname{smat}(\cdot)$ is its inverse. The identity $\|\bm{X}\|_{\textnormal{F}} = \|\operatorname{svec}(\bm{X})\|_{2}$ shows that $(\mathbb{S}^{n}, \Vert \cdot \Vert_{\textnormal{F}})$ is isometric to $(\mathbb{R}^{n(n+1)/2}, \Vert \cdot \Vert_{2})$. The Euclidean projection onto a closed convex set $C \subset \mathbb{R}^{d}$ is $\mathcal{P}_{C}(\bm{z}) = \arg\min_{\bm{x}\in C} \|\bm{x}-\bm{z}\|_{2}$. When $C = \mathbb{S}_{+}^{n}$, the projection is taken in $(\mathbb{S}^{n}, \Vert \cdot \Vert_{\textnormal{F}})$ and coincides with the Euclidean projection after applying $\operatorname{svec}(\cdot)$. For a convex cone $K$, its polar cone is $K^{\circ} = \{\bm{y} : \bm{y}^{\top}\bm{x} \ge 0,\ \forall\,\bm{x}\in K\}$.

\section{Reformulating Quadratic SOS Programs} \label{sec:lifting}
When $\bm{Q} = \bm{0}$ in \cref{opt:QSOS}, linear SOS programs can be parsed into Semidefinite Programming (SDP) problems \cite{parrilo2000structured,parrilo2003semidefinite,blekherman2012}, and the equivalence between SOS and positive semidefinite constraints underlies the standard conic formulation \cite{lofberg2009pre,jagt2022efficient},
\[
    \bm{A}\bm{\xi} = \bm{b}, \qquad \bm{\xi} \in K,
\]
where $\bm{A} \in \mathbb{R}^{m \times n}$, $\bm{b} \in \mathbb{R}^{m}$, $K \subset \mathbb{R}^{n}$ is a nonempty closed convex cone, and $\bm{\xi} =
\begin{bmatrix}
    \bm{\xi}_{\textnormal{f}}^{\top} &
    \bm{\xi}_{\textnormal{sos}}^{\top}
\end{bmatrix}^{\top}
\in \mathbb{R}^{n}$ collects free polynomial coefficients and SOS polynomials coefficients.

Applying this representation to \cref{opt:QSOS} yields a Quadratic Conic Program (QCP) of the form
\begin{equation}
\begin{aligned}
    \min_{\bm{\xi}} \quad &
    \tfrac{1}{2}\bm{\xi}_{\textnormal{f}}^{\top}\bm{Q}\bm{\xi}_{\textnormal{f}}
    + \bm{w}^{\top}\bm{\xi} \quad 
    \text{s.t.} \quad \bm{A}\bm{\xi} = \bm{b}, \quad \bm{\xi} \in K.
\end{aligned}
\tag{QCP}\label{opt:QCP}
\end{equation}

\textbf{Lifting Quadratic SOS Programs into Linear Conic Programs.}
A common approach is to lift the quadratic objective into a linear conic form by introducing an auxiliary variable $\gamma \ge 0$ satisfying $\tfrac{1}{2}\bm{\xi}_{\textnormal{f}}^{\top}\bm{Q}\bm{\xi}_{\textnormal{f}} \le \gamma$. This inequality can be encoded either as a second-order cone constraint or as a semidefinite constraint.

For the SOCP lifting, the Cholesky factorization $\bm{Q} = \bm{E}\bm{E}^{\top}$ yields
\begin{equation}
    \bm{\xi}_{\textnormal{f}}^{\top}\bm{Q}\bm{\xi}_{\textnormal{f}}
    = \bm{\xi}_{\textnormal{f}}^{\top}\bm{E}\bm{E}^{\top}\bm{\xi}_{\textnormal{f}}
    = \| \bm{E}^{\top}\bm{\xi}_{\textnormal{f}} \|_{2}^{2}
    \le 2\gamma,
    \label{eq:epigraph}
\end{equation}
which defines a rotated second-order cone \cite{lobo1998applications}. The lifted problem becomes
\begin{equation}
\begin{aligned}
    \min_{\gamma,\,\bm{\xi}} \quad & \gamma + \bm{w}^{\top}\bm{\xi} \quad
    \text{s.t.} \quad \bm{A}\bm{\xi} = \bm{b}, \quad \left\|
        \begin{bmatrix}
            2\bm{E}^{\top}\bm{\xi}_{\textnormal{f}} \\
            \gamma - 2
        \end{bmatrix}
      \right\|_{2}
      \le \gamma + 2, \quad \bm{\xi} \in K,\quad \gamma \ge 0.
\end{aligned}
\tag{SOCP}\label{opt:socp}
\end{equation}

Alternatively, the Schur complement encodes the same inequality as the linear matrix inequality \cite{vandenberghe1996semidefinite,calafiore2014optimization}
\[
    2\gamma - \bm{\xi}_{\textnormal{f}}^{\top}\bm{E}\bm{E}^{\top}\bm{\xi}_{\textnormal{f}} \ge 0
    \iff
    \begin{bmatrix}
        2\gamma & \bm{\xi}_{\textnormal{f}}^{\top}\bm{E} \\
        \bm{E}^{\top}\bm{\xi}_{\textnormal{f}} & \bm{I}
    \end{bmatrix}
    \in \mathbb{S}_{+}^{n_{\textnormal{f}}+1},
\]
yielding an SDP lifting with an additional $(n_{\textnormal{f}}+1) \times (n_{\textnormal{f}}+1)$ positive semidefinite constraint,
\begin{equation}
\begin{aligned}
    \min_{\gamma,\,\bm{\xi}} \quad & \gamma + \bm{w}^{\top}\bm{\xi} \quad \text{s.t.} \quad \bm{A}\bm{\xi} = \bm{b}, \quad
    \begin{bmatrix}
        2\gamma & \bm{\xi}_{\textnormal{f}}^{\top}\bm{E} \\
        \bm{E}^{\top}\bm{\xi}_{\textnormal{f}} & \bm{I}
    \end{bmatrix}
    \in \mathbb{S}_{+}^{n_{\textnormal{f}}+1}, \quad \bm{\xi} \in K,\quad \gamma \ge 0.
\end{aligned}
\tag{SDP}\label{opt:schur}
\end{equation}

Both liftings increase the decision variable dimension from 
$n$ to $n + 1$ and introduce conic constraints whose sizes scale as $\mathcal{O}(n)$ for \cref{opt:socp} and $\mathcal{O}(n^{2})$ for \cref{opt:schur}. In high-dimensional polynomial optimization, where the number of coefficients grows combinatorially, these liftings create computational bottlenecks: SOCP constraints enlarge the linear systems solved at each iteration, and SDP constraints require expensive factorizations. Consequently, both approaches become inefficient for large QSOS problems, motivating a lifting-free method that solves \cref{opt:QCP} directly without increasing cone dimension or incurring additional matrix operations.

\textbf{Quadratic Conic Program Solvers.}
Several specialized solvers, including COSMO \cite{garstka2021cosmo}, SCS \cite{o2021operator}, and Clarabel \cite{goulart2024clarabel}, are able to directly handle quadratic objectives without reducing to a linear CP. These methods solve problems of the form
\begin{equation}
\begin{aligned}
    \min_{\bm{\xi},\,\bm{s}} \quad &
    \tfrac{1}{2}\bm{\xi}^{\top}\widetilde{\bm{Q}}\bm{\xi}
    + \bm{w}^{\top}\bm{\xi} \quad
    \text{s.t.} \quad \widetilde{\bm{A}}\bm{\xi} + \bm{s} = \widetilde{\bm{b}}, \quad \bm{s} \in \widetilde{K},
\end{aligned}
\label{opt:standard-QCP}
\end{equation}
typically using operator splitting or homogeneous self-dual embeddings.

Comparing \cref{opt:standard-QCP} with \cref{opt:QCP}, there are key structural differences lying in the quadratic term of the objective and how the constraints are defined. In the first, the matrix $\widetilde{\bm{Q}}$ acts on the full decision variable $\bm{\xi} \in \mathbb{R}^{n}$, whereas in \cref{opt:QCP} the quadratic term $\tfrac{1}{2}\bm{\xi}_{\textnormal{f}}^{\top}\bm{Q}\bm{\xi}_{\textnormal{f}}$ acts only on the free variables $\bm{\xi}_{\textnormal{f}} \in \mathbb{R}^{n_{\textnormal{f}}}$, with the SOS variables $\bm{\xi}_{\textnormal{sos}}^{(j)}$, $j=1,\dots,m_{\textnormal{sos}}$, constrained to a product of positive semidefinite cones. Secondly, the conic constraint $\bm{\xi} \in K$ in \cref{opt:QCP} requires the SOS variables to lie in a product of positive semidefinite cones, which in the standard form must be enforced through the slack variable as $\bm{s} \in \widetilde{K}$. This means the equality constraints must simultaneously encode both the original affine constraints and the positive semidefinite cone membership. The latter requires introducing additional equality constraints, which link the SOS variables to slack variables to the original conic constrained variables.

Despite avoiding explicit SOCP/SDP liftings, these solvers still inherit structural limitations in large SOS problems. Enforcing PSD constraints requires slack variables and repeated projections or factorizations, and the HSDE size grows with the conic structure. As the number and dimension of SOS blocks increase, these operations scale poorly and performance becomes sensitive to conditioning and sparsity patterns. Consequently, COSMO, SCS, and Clarabel may fail or slow down significantly on dense or high-dimensional SOS instances. These limitations motivate our lifting-free approach that preserves the original conic structure, avoids slack variables and augmented Lagrangians, and yields closed-form primal updates together with non-asymptotic convergence guarantees.

\section{Regularization Method for Quadratic SOS Programs}
\label{sec:theory}

The previous section reviewed lifting-based reformulations of quadratic conic programs and highlighted how second-order cone and semidefinite approaches increase conic dimensions and computational cost. This motivates a lifting-free strategy. Instead of adding new constraints, the objective of \cref{opt:QCP} is modified, leading to the regularized problem
\begin{equation}
\begin{aligned}
    \min_{\bm{\xi}} \quad &
    \tfrac{1}{2}\bm{\xi}_{\textnormal{f}}^{\top}\bm{Q}\bm{\xi}_{\textnormal{f}}
    + \bm{w}^{\top}\bm{\xi}
    + \tfrac{\rho}{2}\sum_{j=1}^{m_{\textnormal{sos}}}
      \|\bm{\xi}_{\textnormal{sos}}^{(j)}\|_{2}^{2} \quad \text{s.t.} \quad \bm{A}\bm{\xi} = \bm{b}, \quad \bm{\xi} \in K,
\end{aligned}
\tag{Reg-QCP}\label{opt:Reg-QCP}
\end{equation}
where $\rho>0$ and $\bm{Q}$ is positive definite.

The penalty term regularizes variables associated with positive semidefinite cones and yields a dual problem solvable via first-order methods. As shown in \Cref{sec:convergence}, solutions of \cref{opt:Reg-QCP} converge to those of \cref{opt:QCP}.

The method proceeds by:
\begin{enumerate}
    \item Dualizing only the affine constraints;
    \item Deriving closed-form minimizers of the Lagrangian subproblem;
    \item Establishing concavity, differentiability, and Lipschitz smoothness of the dual function;
    \item Applying accelerated gradient ascent to maximize the dual.
\end{enumerate}

\cref{alg:first-order-solver} provides an overview of our proposed solution method. Our algorithm solves the dual problem in \cref{opt:dual-problem-def} and returns the solutions to \cref{opt:Reg-QCP} analytically. At each iteration $k$, the dual iterate $\bm{\lambda}$ is computed using the gradient in \cref{eq:gradient-dual} evaluated at the extrapolation point $\bm{y}$. The algorithm terminates either when the maximum number of iterations is reached or when the termination criteria are satisfied (see \Cref{sec:convergence}). The next subsections present a sequential derivation of each component of the algorithm.

\begin{algorithm}
\caption{Dual-based accelerated gradient for \cref{opt:Reg-QCP}}
\label{alg:first-order-solver}
\begin{algorithmic}[1]
\State \textbf{Input:} $Q$, $w$, $A$, $b$, $\rho>0$, $\epsilon_{\mathrm{abs}}>0$, $\epsilon_{\mathrm{rel}}>0$, $N\ge 1$, $\eta = 1/L$
\State \textbf{Initialize:} $\lambda^{(0)}\gets 0$, $\bm{y}^{(0)}\gets \lambda^{(0)}$, $t^{(0)}\gets 1$
\For{$k = 0$ to $N-1$}
    \If{$k > 0$}
        \State $t^{(k)} \gets \tfrac12 + \sqrt{\tfrac14 + (t^{(k-1)})^{2}}$
        \State $\lambda^{(k)} \gets \bm{y}^{(k-1)} + \eta\,\nabla g(\bm{y}^{(k-1)})$
        \If{$\nabla g(\bm{y}^{(k-1)})^{\top}(\lambda^{(k)}-\lambda^{(k-1)}) < 0$}
            \State $t^{(k)} \gets 1$
            \State $\lambda^{(k-1)} \gets \lambda^{(k)}$
        \EndIf
        \State $\bm{y}^{(k)} \gets \lambda^{(k)} + \frac{t^{(k-1)}-1}{t^{(k)}}(\lambda^{(k)}-\lambda^{(k-1)})$
    \EndIf
    \State Compute $\nabla g(\bm{y}^{(k)})$ using \cref{eq:gradient-dual}
    \State $r_{\mathrm{prim}}^{(k)} \gets \|\nabla g(\bm{y}^{(k)})\|_{\infty}$
    \State Recover $\bm{\xi}_{\textnormal{f}}^{(k)} = \mathcal{S}_{\bm{\xi}_{\textnormal{f}}}(\bm{y}^{(k)})$ and $\bm{\xi}_{\textnormal{sos}}^{(j,k)} = \mathcal{S}_{\bm{\xi}_{\textnormal{sos}}^{(j)}}(\bm{y}^{(k)})$
    \State Compute $f_{\rho}^{(k)}$ and $g^{(k)}$
    \State $r_{\mathrm{gap}}^{(k)} \gets |\bm{y}^{(k)\top}\nabla g(\bm{y}^{(k)})|$
    \If{$r_{\mathrm{prim}}^{(k)} < \epsilon_{\mathrm{abs}} + \epsilon_{\mathrm{rel}}\max\{\|\bm{y}^{(k)}\|_{\infty},1\}$ \textbf{and}
        $r_{\mathrm{gap}}^{(k)} < \epsilon_{\mathrm{abs}} + \epsilon_{\mathrm{rel}}\max\{|f_{\rho}^{(k)}|,|g^{(k)}|\}$}
        \State \textbf{break}
    \EndIf
\EndFor
\State \textbf{Output:} primal solution from $\bm{y}^{(k)}$ and dual solution $\lambda = \bm{y}^{(k)}$
\end{algorithmic}
\end{algorithm}

\subsection{Partial Lagrangian Dualization}

We define the partial Lagrangian of \cref{opt:Reg-QCP} as
\begin{equation}
    \begin{split}
        \mathcal{L}\left(\bm{\xi},\bm{\lambda}\right) \coloneqq \frac{1}{2}\bm{\xi}_{\textnormal{f}}^{\top}\bm{Q}\bm{\xi}_{\textnormal{f}} + \bm{w}^{\top} \bm{\xi} + \frac{\rho}{2} \sum_{j=1}^{m_{\textnormal{sos}}} \Vert \bm{\xi}_{\textnormal{sos}}^{(j)} \Vert_{\textnormal{2}}^{2} - \bm{\lambda}^{\top}(\bm{A}\bm{\xi} - \bm{b}),
    \end{split}
    \nonumber
\end{equation}
where $\bm{\xi} = \begin{bmatrix}
    \bm{\xi}_{\textnormal{f}}^{\top}  &  \bm{\xi}_{\textnormal{sos}}^{\top}
\end{bmatrix}^{\top}$ is the vector of decision variables (primal variables) and $\bm{\lambda} \in \mathbb{R}^{m}$ is the vector of Lagrange multipliers (dual variables). This is a partial Lagrangian because only the affine constraints are dualized and the conic constraints on the primal variables remain \cite{malick2004dual}.

Conformal decompositions of $\bm{A}$ and $\bm{w}$
\begin{equation}
    \bm{A} = \begin{bmatrix}
    \bm{A}_{\textnormal{f}} & \bm{A}_{\textnormal{sos}}^{(1)} & \cdots & \bm{A}_{\textnormal{sos}}^{(m_{\textnormal{sos}})}
\end{bmatrix}, \quad \bm{w} = \begin{bmatrix}
    \bm{w}_{\textnormal{f}}^{\top} & {\bm{w}_{\textnormal{sos}}^{(1)}}^{\top} & \cdots & {\bm{w}_{\textnormal{sos}}^{(m_{\textnormal{sos}})}}
\end{bmatrix}^{\top}
\end{equation}
separate the partial Lagrangian additively as $\mathcal{L}\left(\bm{\xi},\bm{\lambda}\right) = \mathcal{L}_{\textnormal{f}}(\bm{\xi}_{\textnormal{f}},\bm{\lambda}) + \sum_{j=1}^{m_{\textnormal{sos}}} \mathcal{L}_{\textnormal{sos}}(\bm{\xi}_{\textnormal{sos}}^{(j)},\bm{\lambda})$, with $\mathcal{L}_{\textnormal{f}}$ and $\mathcal{L}_{\textnormal{sos}}$ defined, respectively, as
\begin{subequations}
\begin{align}
        \mathcal{L}_{\textnormal{f}}(\bm{\xi}_{\textnormal{f}},\bm{\lambda})  & \coloneqq \frac{1}{2}\bm{\xi}_{\textnormal{f}}^{\top}\bm{Q}\bm{\xi}_{\textnormal{f}} + \bm{w}_{\textnormal{f}}^{\top} \bm{\xi}_{\textnormal{f}} - \bm{\lambda}^{\top}(\bm{A}_{\textnormal{f}}\bm{\xi}_{\textnormal{f}} - \bm{b}),
    \label{eq:free-Lagrangian} \\
        \mathcal{L}_{\textnormal{sos}}(\bm{\xi}_{\textnormal{sos}}^{(j)},\bm{\lambda})  & \coloneqq \frac{\rho}{2} \Vert \bm{\xi}_{\textnormal{sos}}^{(j)} \Vert_{\textnormal{2}}^{2} + {\bm{w}_{\textnormal{sos}}^{(j)}}^{\top} \bm{\xi}_{\textnormal{sos}}^{(j)} - \bm{\lambda}^{\top} \bm{A}_{\textnormal{sos}}^{(j)}\bm{\xi}_{\textnormal{sos}}^{(j)}, \quad \text{for all } j=1,\dots,m_{\textnormal{sos}}.
    \label{eq:sos-Lagrangian}
\end{align}
\end{subequations}

Consequently, the dual problem of \cref{opt:Reg-QCP} is defined as
\begin{equation}
    \max_{\bm{\lambda}\in\mathbb{R}^{m}} \quad \min_{\bm{\xi} \in K} \mathcal{L}\left(\bm{\xi},\bm{\lambda}\right) \equiv \max_{\bm{\lambda}\in\mathbb{R}^{m}} \quad \min_{\bm{\xi} \in K} \left( \mathcal{L}_{\textnormal{f}}(\bm{\xi}_{\textnormal{f}},\bm{\lambda}) + \sum_{j=1}^{m_{\textnormal{sos}}} \mathcal{L}_{\textnormal{sos}}(\bm{\xi}_{\textnormal{sos}}^{(j)},\bm{\lambda}) \right).
    \nonumber
\end{equation}

In this dual problem, the Lagrangian subproblem consists of minimizing the partial Lagrangian over the primal variables.

Define the dual function $g: \mathbb{R}^{m} \to \mathbb{R}$ as
\begin{equation}
    g(\bm{\lambda}) = \min_{\bm{\xi} \in K} \left( \mathcal{L}_{\textnormal{f}}(\bm{\xi}_{\textnormal{f}},\bm{\lambda}) + \sum_{j=1}^{m_{\textnormal{sos}}} \mathcal{L}_{\textnormal{sos}}(\bm{\xi}_{\textnormal{sos}}^{(j)},\bm{\lambda}) \right),
    \label{eq:dual-function-def}
\end{equation}
then the dual problem of \cref{opt:Reg-QCP} is equivalent to the unconstrained optimization problem
\begin{equation}
    \max_{\bm{\lambda}\in\mathbb{R}^{m}} \quad g(\bm{\lambda}).
    \label{opt:dual-problem-def}
\end{equation}

Next, it is shown that the Lagrangian subproblem has a closed-form solution that allows the dual function to be written explicitly in terms of the dual variables.

\subsection{Closed-form Solutions for the Lagrangian Subproblem}

The Lagrangian subproblem decomposes additively into independent subproblems
\begin{equation}
    \min_{\bm{\xi} \in K} \left( \mathcal{L}_{\textnormal{f}}(\bm{\xi}_{\textnormal{f}},\bm{\lambda}) + \sum_{j=1}^{m_{\textnormal{sos}}} \mathcal{L}_{\textnormal{sos}}(\bm{\xi}_{\textnormal{sos}}^{(j)},\bm{\lambda}) \right) \equiv \min_{\bm{\xi}_{\textnormal{f}} \in \mathbb{R}^{n_{\textnormal{f}}}} \mathcal{L}_{\textnormal{f}}(\bm{\xi}_{\textnormal{f}},\bm{\lambda}) + \sum_{j=1}^{m_{\textnormal{sos}}} \min_{\bm{\xi}_{\textnormal{sos}}^{(j)} \in \mathbb{S}_{+}^{h_j}} \mathcal{L}_{\textnormal{sos}}(\bm{\xi}_{\textnormal{sos}}^{(j)},\bm{\lambda}).
    \nonumber
\end{equation}

For fixed dual variables $\bm{\lambda}$, define the corresponding minimizers of these independent subproblems as
\begin{subequations}
\begin{align}
        \mathcal{S}_{\bm{\xi}_{\textnormal{f}}}(\bm{\lambda}) & \coloneqq \argmin_{\bm{\xi}_{\textnormal{f}} \in \mathbb{R}^{n_{\textnormal{f}}}} \mathcal{L}_{\textnormal{f}}(\bm{\xi}_{\textnormal{f}},\bm{\lambda}),
    \label{eq:ref-sol-free} \\
        \mathcal{S}_{\bm{\xi}_{\textnormal{sos}}^{(j)}}(\bm{\lambda}) & \coloneqq \argmin_{\bm{\xi}_{\textnormal{sos}}^{(j)}\in\mathbb{S}_{+}^{h_{j}}} \mathcal{L}_{\textnormal{sos}}(\bm{\xi}_{\textnormal{sos}}^{(j)},\bm{\lambda}), \quad \text{for all } j=1,\dots,m_{\textnormal{sos}}.
    \label{eq:ref-sol-sos}
\end{align}
\end{subequations}

Each of the independent subproblems admits a closed-form solution, as stated in the following lemmas.

\begin{lemma}
    The minimizer $\mathcal{S}_{\bm{\xi}_{\textnormal{f}}}(\bm{\lambda})$ defined in \cref{eq:ref-sol-free} is given by
    \begin{equation}
            \mathcal{S}_{\bm{\xi}_{\textnormal{f}}}(\bm{\lambda}) = \bm{Q}^{-1} (\bm{A}_{\textnormal{f}}^{\top}\bm{\lambda} - \bm{w}_{\textnormal{f}}).
        \label{eq:free-sol}
    \end{equation}
    \label{lem:free-sol}
\end{lemma}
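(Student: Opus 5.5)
The plan is to treat \cref{eq:ref-sol-free} as an unconstrained strictly convex quadratic minimization and read off its unique minimizer from the first-order optimality condition. Since the free variables $\bm{\xi}_{\textnormal{f}}$ range over all of $\mathbb{R}^{n_{\textnormal{f}}}$, with no conic restriction, no projection or constraint qualification is needed. The only structural input is the standing assumption in \cref{opt:Reg-QCP} that $\bm{Q}$ is symmetric positive definite.

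First, I will rewrite $\mathcal{L}_{\textnormal{f}}$ from \cref{eq:free-Lagrangian} by collecting the linear terms in $\bm{\xi}_{\textnormal{f}}$:
\[
\mathcal{L}_{\textnormal{f}}(\bm{\xi}_{\textnormal{f}},\bm{\lambda}) = \tfrac{1}{2}\bm{\xi}_{\textnormal{f}}^{\top}\bm{Q}\bm{\xi}_{\textnormal{f}} - (\bm{A}_{\textnormal{f}}^{\top}\bm{\lambda} - \bm{w}_{\textnormal{f}})^{\top}\bm{\xi}_{\textnormal{f}} + \bm{\lambda}^{\top}\bm{b}.
\]
The last term is constant in $\bm{\xi}_{\textnormal{f}}$. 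The Hessian with respect to $\bm{\xi}_{\textnormal{f}}$ is $\bm{Q}\succ 0$, so $\mathcal{L}_{\textnormal{f}}(\cdot,\bm{\lambda})$ is strictly convex and coercive. Hence a minimizer exists and is unique, and the $\argmin$ in \cref{eq:ref-sol-free} is single-valued.

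Next, I will set the gradient to zero: $\nabla_{\bm{\xi}_{\textnormal{f}}}\mathcal{L}_{\textnormal{f}} = \bm{Q}\bm{\xi}_{\textnormal{f}} - \bm{A}_{\textnormal{f}}^{\top}\bm{\lambda} + \bm{w}_{\textnormal{f}} = \bm{0}$. Because $\bm{Q}$ is invertible, this gives \cref{eq:free-sol}. For convex differentiable functions on the whole space, the stationarity condition is sufficient for global optimality. Alternatively, completing the square shows the same thing directly:
\[
\mathcal{L}_{\textnormal{f}} = \tfrac12(\bm{\xi}_{\textnormal{f}}-\bm{\xi}^\star)^{\top}\bm{Q}(\bm{\xi}_{\textnormal{f}}-\bm{\xi}^\star) + \text{const},
\]
with $\bm{\xi}^\star$ the claimed minimizer.

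There is no real obstacle here. The one point requiring care is to invoke positive definiteness of $\bm{Q}$, rather than mere symmetry as stated for \cref{opt:QSOS}. This is what guarantees both invertibility and that the stationary point is the global minimizer rather than a saddle point.
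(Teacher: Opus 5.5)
Your proof is correct and matches the paper's argument in substance. The paper completes the square to $\frac{1}{2}\|\bm{Q}^{1/2}\bm{\xi}_{\textnormal{f}} - \bm{Q}^{-1/2}(\bm{A}_{\textnormal{f}}^{\top}\bm{\lambda} - \bm{w}_{\textnormal{f}})\|_2^2$ plus terms independent of $\bm{\xi}_{\textnormal{f}}$, then reads off the minimizer from the least-squares normal equations with $\bm{\Theta} = \bm{Q}^{1/2}$, which is exactly your stationarity condition $\bm{Q}\bm{\xi}_{\textnormal{f}} = \bm{A}_{\textnormal{f}}^{\top}\bm{\lambda} - \bm{w}_{\textnormal{f}}$; your explicit note that $\bm{Q} \succ 0$ makes the minimizer unique is something the paper leaves implicit.
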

\begin{proof}
    By completing the square of $\mathcal{L}_{\textnormal{f}}(\bm{\xi}_{\textnormal{f}},\bm{\lambda})$ in \cref{eq:free-Lagrangian}, it holds that
    \begin{equation}
            \mathcal{L}_{\textnormal{f}}(\bm{\xi}_{\textnormal{f}},\bm{\lambda}) = \frac{1}{2}\left\Vert \bm{Q}^{1/2}\bm{\xi}_{\textnormal{f}} - \bm{Q}^{-1/2}\left( \bm{A}_{\textnormal{f}}^{\top}\bm{\lambda} - \bm{w}_{\textnormal{f}}\right)\right\Vert_{2}^{2} - \frac{1}{2}\left\Vert \bm{Q}^{-1/2}\left( \bm{A}_{\textnormal{f}}^{\top}\bm{\lambda} - \bm{w}_{\textnormal{f}}\right)\right\Vert_{2}^{2} + \bm{\lambda}^{\top}\bm{b},
        \label{eq:complete-square-free}
    \end{equation}
    where only the first term involves $\bm{\xi}_{\textnormal{f}}$. Hence, it holds the following equivalence,
    \[
        \argmin_{\bm{\xi}_{\textnormal{f}} \in \mathbb{R}^{n_{\textnormal{f}}}} \mathcal{L}_{\textnormal{f}}(\bm{\xi}_{\textnormal{f}},\bm{\lambda}) = \argmin_{\bm{\xi}_{\textnormal{f}} \in \mathbb{R}^{n_{\textnormal{f}}}} \frac{1}{2}\left\Vert \bm{Q}^{1/2}\bm{\xi}_{\textnormal{f}} - \bm{Q}^{-1/2}\left( \bm{A}_{\textnormal{f}}^{\top}\bm{\lambda} - \bm{w}_{\textnormal{f}}\right)\right\Vert_{2}^{2}. 
    \]
    
    This problem is analogue to a least-squares problem $\min_{\bm{\beta}} \frac{1}{2}\left\Vert \bm{\Theta}\bm{\beta} - \bm{\Psi} \right\Vert_{2}^{2},$ with $\bm{\beta} = \bm{\xi}_{\textnormal{f}}$, $\bm{\Theta} = \bm{Q}^{1/2}$, and $\bm{\Psi} = \bm{Q}^{-1/2}( \bm{A}_{\textnormal{f}}^{\top}\bm{\lambda} - \bm{w}_{\textnormal{f}})$, which has a well-known closed form solution defined by $\bm{\beta}^{\star} = (\bm{\Theta}^{\top}\bm{\Theta})^{-1}\bm{\Theta}^{\top}\bm{\Psi}$. Therefore, the solution to the minimization of $\mathcal{L}_{\textnormal{f}}(\bm{\xi}_{\textnormal{f}},\bm{\lambda})$ over $\bm{\xi}_{\textnormal{f}}$ is given by \cref{eq:free-sol}.
\end{proof}

\begin{lemma}
    The minimizer $\mathcal{S}_{\bm{\xi}_{\textnormal{sos}}^{(j)}}(\bm{\lambda})$ defined in \cref{eq:ref-sol-sos} is given by
    \begin{equation}
            \mathcal{S}_{\bm{\xi}_{\textnormal{sos}}^{(j)}}(\bm{\lambda}) = \mathcal{P}_{\mathbb{S}_{+}^{h_{j}}}\left(\frac{1}{\rho}\left({\bm{A}_{\textnormal{sos}}^{(j)}}^{\top}\bm{\lambda} - \bm{w}_{\textnormal{sos}}^{(j)}\right)\right), \quad \text{for all } j=1,\dots,m_{\textnormal{sos}}.
        \label{eq:sos-sol}
    \end{equation}
    \label{lem:sos-sol}
\end{lemma}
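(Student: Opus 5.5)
Mirroring the proof of \cref{lem:free-sol}, I will complete the square in \cref{eq:sos-Lagrangian} and then recognize the remaining minimization as a Euclidean projection onto the cone. Write $\bm{v}_j \coloneqq {\bm{A}_{\textnormal{sos}}^{(j)}}^{\top}\bm{\lambda} - \bm{w}_{\textnormal{sos}}^{(j)}$. The linear part of $\mathcal{L}_{\textnormal{sos}}$ is $-\bm{v}_j^{\top}\bm{\xi}_{\textnormal{sos}}^{(j)}$. Since $\rho>0$, completing the square gives
\begin{equation}
    \mathcal{L}_{\textnormal{sos}}(\bm{\xi}_{\textnormal{sos}}^{(j)},\bm{\lambda}) = \frac{\rho}{2}\left\Vert \bm{\xi}_{\textnormal{sos}}^{(j)} - \tfrac{1}{\rho}\bm{v}_j \right\Vert_{2}^{2} - \frac{1}{2\rho}\Vert \bm{v}_j\Vert_{2}^{2}.
    \nonumber
\end{equation}
Only the first term depends on $\bm{\xi}_{\textnormal{sos}}^{(j)}$, and it is a positive multiple of a squared distance. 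Therefore
\[
\argmin_{\bm{\xi}_{\textnormal{sos}}^{(j)}\in\mathbb{S}_{+}^{h_j}}\mathcal{L}_{\textnormal{sos}}(\bm{\xi}_{\textnormal{sos}}^{(j)},\bm{\lambda}) = \argmin_{\bm{\xi}_{\textnormal{sos}}^{(j)}\in\mathbb{S}_{+}^{h_j}} \left\Vert \bm{\xi}_{\textnormal{sos}}^{(j)} - \tfrac{1}{\rho}\bm{v}_j\right\Vert_{2}.
\]

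The key step is to identify this last argmin with $\mathcal{P}_{\mathbb{S}_{+}^{h_j}}$. Here $\bm{\xi}_{\textnormal{sos}}^{(j)}$ is understood in $\operatorname{svec}$ coordinates, as in the Notation paragraph. Because $\|\bm{X}\|_{\textnormal{F}}=\|\operatorname{svec}(\bm{X})\|_2$, the Euclidean distance in $\mathbb{R}^{h_j(h_j+1)/2}$ coincides with the Frobenius distance in $\mathbb{S}^{h_j}$. So the minimization is exactly the Euclidean projection of $\tfrac{1}{\rho}\bm{v}_j$ onto $\mathbb{S}_{+}^{h_j}$, as stated in the paper's notation.

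It remains to confirm that the minimizer exists and is unique, so that the argmin is a single point. The set $\mathbb{S}_{+}^{h_j}$ is nonempty, closed and convex, and the objective is strongly convex because $\rho>0$. The projection theorem then gives a unique minimizer, which is \cref{eq:sos-sol}. One could also mention the explicit formula from the eigendecomposition, obtained by clipping negative eigenvalues to zero, although this is not needed for the lemma.

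The main obstacle is only bookkeeping. The $\operatorname{svec}$ scaling must be consistent, meaning that $\bm{A}_{\textnormal{sos}}^{(j)}$ and $\bm{w}_{\textnormal{sos}}^{(j)}$ act on $\operatorname{svec}$ coordinates. Only then is the regularizer $\|\bm{\xi}_{\textnormal{sos}}^{(j)}\|_2^2$ the squared Frobenius norm and the projection the standard one.
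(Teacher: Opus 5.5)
Your proposal is correct and follows essentially the same route as the paper: you complete the square, observe that only $\frac{\rho}{2}\|\bm{\xi}_{\textnormal{sos}}^{(j)} - \frac{1}{\rho}\bm{v}_j\|_2^2$ depends on the variable, and use the $\operatorname{svec}$ isometry to identify the minimization with the Frobenius projection onto $\mathbb{S}_{+}^{h_j}$; the paper cites the eigenvalue-clipping formula at the point where you invoke the projection theorem for existence and uniqueness. Incidentally, your constant $-\frac{1}{2\rho}\|\bm{v}_j\|_2^2$ is the right one, whereas the paper's displayed constant $-\frac{1}{2}\|\frac{1}{\rho}\bm{v}_j\|_2^2$ is off by a factor of $\rho$; this does not affect the argmin.
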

\begin{proof}
    By completing the square of $\mathcal{L}_{\textnormal{sos}}(\bm{\xi}_{\textnormal{sos}}^{(j)},\bm{\lambda})$ in \cref{eq:sos-Lagrangian}, it holds that
    \begin{equation}
        \mathcal{L}_{\textnormal{sos}}(\bm{\xi}_{\textnormal{sos}}^{(j)},\bm{\lambda}) = \frac{\rho}{2}\left\Vert \bm{\xi}_{\textnormal{sos}}^{(j)} - \frac{1}{\rho}\left({\bm{A}_{\textnormal{sos}}^{(j)}}^{\top}\bm{\lambda} - \bm{w}_{\textnormal{sos}}^{(j)}\right)\right\Vert_{2}^{2} - \frac{1}{2}\left\Vert\frac{1}{\rho}\left({\bm{A}_{\textnormal{sos}}^{(j)}}^{\top}\bm{\lambda} - \bm{w}_{\textnormal{sos}}^{(j)}\right)\right\Vert_{2}^{2},
        \label{eq:complete-square-sos}
    \end{equation}
    for all $j=1,\dots,m_{\textnormal{sos}}$, where only the first term involves $\bm{\xi}_{\textnormal{sos}}^{(j)}$. Hence, the following equivalence holds,
    \[
        \argmin_{\bm{\xi}_{\textnormal{sos}}^{(j)}\in\mathbb{S}_{+}^{h_{j}}} \mathcal{L}_{\textnormal{sos}}(\bm{\xi}_{\textnormal{sos}}^{(j)},\bm{\lambda}) =  \argmin_{\bm{\xi}_{\textnormal{sos}}^{(j)}\in\mathbb{S}_{+}^{h_{j}}} \frac{\rho}{2}\left\Vert \bm{\xi}_{\textnormal{sos}}^{(j)} - \frac{1}{\rho}\left({\bm{A}_{\textnormal{sos}}^{(j)}}^{\top}\bm{\lambda} - \bm{w}_{\textnormal{sos}}^{(j)}\right)\right\Vert_{2}^{2}.
    \]
     
    Therefore, the problem of minimizing $\mathcal{L}_{\textnormal{sos}}(\bm{\xi}_{\textnormal{sos}}^{(j)},\bm{\lambda})$ over $\bm{\xi}_{\textnormal{sos}}^{(j)}$ is equivalent to the following optimization problem,
    \[
        \min_{\bm{\xi}_{\textnormal{sos}}^{(j)}\in\mathbb{S}_{+}^{h_{j}}} \quad \frac{\rho}{2}\left\Vert \bm{\xi}_{\textnormal{sos}}^{(j)} - \frac{1}{\rho}\left({\bm{A}_{\textnormal{sos}}^{(j)}}^{\top}\bm{\lambda} - \bm{w}_{\textnormal{sos}}^{(j)}\right)\right\Vert_{2}^{2}.   
    \]

    Since $\bm{\xi}_{\textnormal{sos}}^{(j)} = \operatorname*{svec}(\bm{\Xi}^{(j)})$, by the isometry between the spaces $(\mathbb{R}^{h_{j}(h_{j} + 1)/2}, \Vert \cdot \Vert_{2})$ and $(\mathbb{S}^{h_{j}}, \Vert \cdot \Vert_{\textnormal{F}})$, this is equivalent to $\min_{\bm{\Xi}^{(j)}\in\mathbb{S}_{+}^{h_{j}}} \frac{\rho}{2}\left\Vert \bm{\Xi}^{(j)} - \operatorname*{smat}\left(\frac{1}{\rho}\left({\bm{A}_{\textnormal{sos}}^{(j)}}^{\top}\bm{\lambda} - \bm{w}_{\textnormal{sos}}^{(j)}\right)\right)\right\Vert_{\textnormal{F}}^{2}$, which corresponds to computing a nearest symmetric positive semidefinite matrix. Thus, the minimizer of this problem is obtained by projecting $\operatorname*{smat}({\rho}^{-1}({\bm{A}_{\textnormal{sos}}^{(j)}}^{\top}\bm{\lambda} - \bm{w}_{\textnormal{sos}}^{(j)})) \in \mathbb{S}^{h_{j}}$ onto the positive semidefinite cone $\mathbb{S}_{+}^{h_{j}}$. This projection operation admits a closed-form expression of the form $\mathcal{P}_{\mathbb{S}_{+}^{h}}(\bm{M}) = \bm{V}\bm{D}_{+}\bm{V}^{\top}$, where $\bm{M} = \bm{V}\bm{D}\bm{V}^{\top}$ is the eigenvalue decomposition and $\bm{D}_{+} = \max(\bm{D},\bm{0})$ (see Section 2 in \cite{higham1988computing}, Theorem 2.4 in \cite{malick2004dual}). Then, the solution of minimizing $\mathcal{L}_{\textnormal{sos}}(\bm{\xi}_{\textnormal{sos}}^{(j)},\bm{\lambda})$ over $\bm{\xi}_{\textnormal{sos}}^{(j)}$ for all $j=1,\dots,m_{\textnormal{sos}}$ is given by \cref{eq:sos-sol}.
\end{proof}

We now can simplify the dual function $g$ in \cref{eq:dual-function-def} by removing the minimization through substituting the closed-form expressions for $\mathcal{S}_{\bm{\xi}_{\textnormal{f}}}(\bm{\lambda})$ and $\mathcal{S}_{\bm{\xi}_{\textnormal{sos}}^{(j)}}(\bm{\lambda})$ given in \cref{lem:free-sol} and \cref{lem:sos-sol},
\begin{equation}
\begin{aligned}
    g(\bm{\lambda})
    &= -\frac{1}{2}\bm{\lambda}^{\top}
        \bm{A}_{\textnormal{f}}\bm{Q}^{-1}\bm{A}_{\textnormal{f}}^{\top}\bm{\lambda}
       + \bm{\lambda}^{\top}\bigl(
            \bm{A}_{\textnormal{f}}\bm{Q}^{-1}\bm{w}_{\textnormal{f}}
            + \bm{b}
         \bigr)
       - \frac{1}{2}\bm{w}_{\textnormal{f}}^{\top}\bm{Q}^{-1}\bm{w}_{\textnormal{f}} \\
    &\quad
    + \sum_{j=1}^{m_{\textnormal{sos}}}
        \Biggl(
            \frac{\rho}{2}
            \Bigl\|
                \mathcal{P}_{\mathbb{S}_{+}^{h_{j}}}\Bigl(
                    \frac{1}{\rho}\bigl(
                        {\bm{A}_{\textnormal{sos}}^{(j)}}^{\top}\bm{\lambda}
                        - \bm{w}_{\textnormal{sos}}^{(j)}
                    \bigr)
                \Bigr)
                - \frac{1}{\rho}\bigl(
                    {\bm{A}_{\textnormal{sos}}^{(j)}}^{\top}\bm{\lambda}
                    - \bm{w}_{\textnormal{sos}}^{(j)}
                \bigr)
            \Bigr\|_{2}^{2} \\
    &\qquad\qquad
            - \frac{1}{2}
              \Bigl\|
                  \frac{1}{\rho}\bigl(
                      {\bm{A}_{\textnormal{sos}}^{(j)}}^{\top}\bm{\lambda}
                      - \bm{w}_{\textnormal{sos}}^{(j)}
                  \bigr)
              \Bigr\|_{2}^{2}
        \Biggr).
\end{aligned}
\label{eq:dual-function-complete}
\end{equation}

We next present Moreau's decomposition theorem and a corollary, allowing the latter to be applied into \cref{eq:dual-function-complete} to further simplify the expression of the dual function.

\begin{theorem}[Section III, Theorem 3.2.5 in \cite{hiriart2013convex}, Proposition 1 in \cite{moreau1962}]
    Let $K$ be a closed convex cone in the Hilbert space $(\mathbb{H},\langle\cdot,\cdot\rangle)$ and a closed and convex polar cone $K^{\circ} = \left\{\bm{a} \in \mathbb{H}: \langle\bm{a},\bm{b}\rangle \leq 0, \forall~\bm{b} \in K\right\}$. For $\bm{x},\bm{y},\bm{z} \in \mathbb{H}$, the following statements are equivalent:
    \begin{enumerate}
        \item $\bm{z} = \bm{x} + \bm{y}$, $\bm{x} \in K$, $\bm{y} \in K^{\circ}$, and $\langle\bm{x},\bm{y}\rangle = 0$;
        \item $\bm{x} = \mathcal{P}_{K}(\bm{z})$ and $\bm{y} = \mathcal{P}_{K^{\circ}}(\bm{z})$.
    \end{enumerate}
    \label{thm_Moreau}
\end{theorem}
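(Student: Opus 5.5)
We prove the equivalence of the two statements of \cref{thm_Moreau} by characterizing the projection onto a closed convex set through its variational inequality, and then using the cone structure to obtain the orthogonality and polarity conditions. The one tool needed is the standard fact that, for a nonempty closed convex set $C \subset \mathbb{H}$, the point $\bm{x} = \mathcal{P}_{C}(\bm{z})$ exists, is unique, and is characterized by
\[
    \bm{x} \in C \quad\text{and}\quad \langle \bm{z}-\bm{x}, \bm{c}-\bm{x}\rangle \le 0 \ \text{ for all } \bm{c}\in C .
\]
This follows by expanding $\|\bm{z}-\bm{x}-t(\bm{c}-\bm{x})\|^2$ for $t\in[0,1]$.

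\textbf{Step 1 (cone version of the characterization).} When $C=K$ is a cone, I will show that $\bm{x}=\mathcal{P}_K(\bm{z})$ if and only if
\[
    \bm{x}\in K,\qquad \bm{z}-\bm{x}\in K^{\circ},\qquad \langle \bm{z}-\bm{x},\bm{x}\rangle = 0 .
\]
For the forward direction, take $\bm{c}=2\bm{x}$ and $\bm{c}=\bm{0}$ (both lie in $K$) in the variational inequality; this forces $\langle \bm{z}-\bm{x},\bm{x}\rangle=0$. The inequality then reduces to $\langle \bm{z}-\bm{x},\bm{c}\rangle\le 0$ for all $\bm{c}\in K$, which says exactly that $\bm{z}-\bm{x}\in K^{\circ}$. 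For the converse, the three conditions give $\langle \bm{z}-\bm{x},\bm{c}-\bm{x}\rangle = \langle \bm{z}-\bm{x},\bm{c}\rangle \le 0$, so the variational inequality holds. The same argument applies verbatim to the closed convex cone $K^{\circ}$.

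\textbf{Step 2 ((1)$\Rightarrow$(2)).} Given the decomposition in (1), we have $\bm{z}-\bm{x}=\bm{y}\in K^{\circ}$ and $\langle\bm{y},\bm{x}\rangle=0$, so Step 1 yields $\bm{x}=\mathcal{P}_K(\bm{z})$. To conclude $\bm{y}=\mathcal{P}_{K^{\circ}}(\bm{z})$, apply Step 1 to $K^{\circ}$. We have $\bm{y}\in K^{\circ}$, $\langle \bm{z}-\bm{y},\bm{y}\rangle=\langle\bm{x},\bm{y}\rangle=0$, and we still need $\bm{z}-\bm{y}=\bm{x}\in K^{\circ\circ}$. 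This holds because $K\subseteq K^{\circ\circ}$, which is immediate from the definition of the polar cone: for $\bm{x}\in K$ and $\bm{a}\in K^{\circ}$, $\langle\bm{x},\bm{a}\rangle\le 0$. The bipolar theorem is therefore not needed.

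\textbf{Step 3 ((2)$\Rightarrow$(1)).} Set $\bm{x}=\mathcal{P}_K(\bm{z})$ and $\bm{y}'=\bm{z}-\bm{x}$. By Step 1, the triple $(\bm{x},\bm{y}',\bm{z})$ satisfies (1). By Step 2, $\bm{y}'=\mathcal{P}_{K^{\circ}}(\bm{z})$, and uniqueness of projections gives $\bm{y}'=\bm{y}$. Hence $\bm{z}=\bm{x}+\bm{y}$ with the required membership and orthogonality.

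The only delicate point is Step 1, specifically the choice of the test points $\bm{c}=2\bm{x}$ and $\bm{c}=\bm{0}$ that converts the variational inequality into an orthogonality condition. Everything else is bookkeeping with the definitions. The orientation convention ($\le 0$) used in the theorem must be kept consistent throughout, since the paper's notation section uses the opposite sign for the polar cone.
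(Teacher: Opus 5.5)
Your proof is correct and complete. The paper does not prove this theorem itself but cites it (Hiriart-Urruty--Lemar\'echal, Thm.~III.3.2.5; Moreau), and your argument is the standard one behind that citation: characterize $\mathcal{P}_K(\bm{z})$ for a cone by $\bm{x}\in K$, $\bm{z}-\bm{x}\in K^{\circ}$, $\langle \bm{z}-\bm{x},\bm{x}\rangle=0$, then apply it to both $K$ and $K^{\circ}$ using only the trivial inclusion $K\subseteq K^{\circ\circ}$. Your caution about the sign convention is also well placed, since the subsequent corollary needs the $\le 0$ polar (so that $\bm{Z}=\mathcal{P}_{\mathbb{S}_+^h}(\bm{Z})+\mathcal{P}_{(\mathbb{S}_+^h)^{\circ}}(\bm{Z})$), not the $\ge 0$ definition given in the notation paragraph.
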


\begin{corollary}
    Consider the closed convex cone $\mathbb{S}_{+}^{h} \subset \mathbb{S}^{h}$ and $\bm{Z} \in \mathbb{S}^{h}$. The following equality holds,
    \begin{equation}
        \begin{split}
            \left\Vert\mathcal{P}_{\mathbb{S}_{+}^{h}}(\bm{Z}) - \bm{Z} \right\Vert_{\textnormal{F}}^{2} - \left\Vert\bm{Z}\right\Vert_{\textnormal{F}}^{2} = -\left\Vert\mathcal{P}_{\mathbb{S}_{+}^{h}}(\bm{Z})\right\Vert_{\textnormal{F}}^{2}.
        \end{split}
        \label{eq:cor_proj}
    \end{equation}
    \label{cor: Moreau with proj}
\end{corollary}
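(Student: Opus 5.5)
We apply \cref{thm_Moreau} in the Hilbert space $(\mathbb{S}^{h},\langle\cdot,\cdot\rangle_{\textnormal{F}})$, where $\langle \bm{X},\bm{Y}\rangle_{\textnormal{F}} = \operatorname{tr}(\bm{X}\bm{Y})$, with $K=\mathbb{S}_{+}^{h}$. This cone is closed and convex. Its polar in the sense of \cref{thm_Moreau} is $-\mathbb{S}_{+}^{h}$, though the identity only needs the abstract decomposition.

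Given $\bm{Z}\in\mathbb{S}^{h}$, set $\bm{X}=\mathcal{P}_{\mathbb{S}_{+}^{h}}(\bm{Z})$ and $\bm{Y}=\mathcal{P}_{K^{\circ}}(\bm{Z})$. The implication $2\Rightarrow1$ of \cref{thm_Moreau} gives $\bm{Z}=\bm{X}+\bm{Y}$ and $\langle\bm{X},\bm{Y}\rangle_{\textnormal{F}}=0$. Hence $\mathcal{P}_{\mathbb{S}_{+}^{h}}(\bm{Z})-\bm{Z}=-\bm{Y}$, so the first term on the left of \cref{eq:cor_proj} equals $\|\bm{Y}\|_{\textnormal{F}}^{2}$.

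Expanding with orthogonality,
\[
\|\bm{Z}\|_{\textnormal{F}}^{2}=\|\bm{X}\|_{\textnormal{F}}^{2}+2\langle\bm{X},\bm{Y}\rangle_{\textnormal{F}}+\|\bm{Y}\|_{\textnormal{F}}^{2}=\|\bm{X}\|_{\textnormal{F}}^{2}+\|\bm{Y}\|_{\textnormal{F}}^{2}.
\]
Therefore $\|\bm{Y}\|_{\textnormal{F}}^{2}-\|\bm{Z}\|_{\textnormal{F}}^{2}=-\|\bm{X}\|_{\textnormal{F}}^{2}$, which is \cref{eq:cor_proj}.

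The only delicate point is the sign convention for the polar cone. The Notation section defines $K^{\circ}$ with $\ge 0$, whereas \cref{thm_Moreau} uses $\le 0$. We adopt the convention of \cref{thm_Moreau}, under which the polar of $\mathbb{S}_{+}^{h}$ is $-\mathbb{S}_{+}^{h}$.

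As a sanity check independent of conventions, use the eigendecomposition $\bm{Z}=\bm{V}\bm{D}\bm{V}^{\top}$. Then $\bm{X}=\bm{V}\bm{D}_{+}\bm{V}^{\top}$ and $\bm{Z}-\bm{X}=\bm{V}\bm{D}_{-}\bm{V}^{\top}$ with $\bm{D}_{-}=\min(\bm{D},\bm{0})$. These two matrices have disjoint eigenvalue supports, so their trace inner product vanishes. The identity then reduces to $\sum_i \min(d_i,0)^2-\sum_i d_i^2=-\sum_i\max(d_i,0)^2$, which holds since $d_i^2=\max(d_i,0)^2+\min(d_i,0)^2$ for each $i$.
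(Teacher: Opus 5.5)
Your proof is correct and is essentially the paper's argument: both rest on the Moreau decomposition $\bm{Z}=\mathcal{P}_{\mathbb{S}_{+}^{h}}(\bm{Z})+\mathcal{P}_{K^{\circ}}(\bm{Z})$ with orthogonal summands, where the paper rewrites the left side as $\|\mathcal{P}_{\mathbb{S}_{+}^{h}}(\bm{Z})\|_{\textnormal{F}}^{2}-2\langle\mathcal{P}_{\mathbb{S}_{+}^{h}}(\bm{Z}),\bm{Z}\rangle_{\textnormal{F}}$ and kills the cross term, while you apply Pythagoras to $\bm{Z}=\bm{X}+\bm{Y}$ directly. Your handling of the polar-cone sign is right (the Notation section's $\ge 0$ describes the dual cone, under which the decomposition would fail), and your eigenvalue computation is a valid standalone proof given the closed-form projection $\bm{V}\bm{D}_{+}\bm{V}^{\top}$.
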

\begin{proof}
The set $\mathbb{S}^{h}$ equipped with the Frobenius inner product $\langle\bm{A},\bm{B}\rangle_{\textnormal{F}}$ define the Hilbert space $(\mathbb{S}^{h},\langle\cdot,\cdot\rangle_{\textnormal{F}})$. It holds that,
\begin{equation}
\begin{aligned}
    \left\Vert\mathcal{P}_{\mathbb{S}_{+}^{h}}(\bm{Z}) - \bm{Z} \right\Vert_{\mathrm{F}}^{2}
    - \left\Vert\bm{Z}\right\Vert_{\mathrm{F}}^{2}
    &= \left\Vert\mathcal{P}_{\mathbb{S}_{+}^{h}}(\bm{Z})\right\Vert_{\mathrm{F}}^{2}
       - 2\left\langle
            \mathcal{P}_{\mathbb{S}_{+}^{h}}(\bm{Z}),
            \bm{Z}
         \right\rangle_{\mathrm{F}} \\
    &= \left\Vert\mathcal{P}_{\mathbb{S}_{+}^{h}}(\bm{Z})\right\Vert_{\mathrm{F}}^{2}
       - 2\left\langle
            \mathcal{P}_{\mathbb{S}_{+}^{h}}(\bm{Z}),
            \mathcal{P}_{\mathbb{S}_{+}^{h}}(\bm{Z})
         \right\rangle_{\mathrm{F}} \\
    &\quad
       - 2\left\langle
            \mathcal{P}_{\mathbb{S}_{+}^{h}}(\bm{Z}),
            \mathcal{P}_{\mathbb{S}_{+}^{o,h}}(\bm{Z})
         \right\rangle_{\mathrm{F}} \\
    &= -\left\Vert\mathcal{P}_{\mathbb{S}_{+}^{h}}(\bm{Z})\right\Vert_{\mathrm{F}}^{2}
       - 2\left\langle
            \mathcal{P}_{\mathbb{S}_{+}^{h}}(\bm{Z}),
            \mathcal{P}_{\mathbb{S}_{+}^{o,h}}(\bm{Z})
         \right\rangle_{\mathrm{F}}.
\end{aligned}
\end{equation}

Since from \cref{thm_Moreau}, $\langle\mathcal{P}_{\mathbb{S}_{+}^{h}}(\bm{Z}),\mathcal{P}_{\mathbb{S}_{+}^{o,h}}(\bm{Z})\rangle_{\textnormal{F}} = 0$, it follows that \cref{eq:cor_proj} holds. Moreover, by the isometry between the spaces $(\mathbb{R}^{h(h + 1)/2}, \Vert \cdot \Vert_{2})$ and $(\mathbb{S}^{h}, \Vert \cdot \Vert_{\textnormal{F}})$, it holds that
\begin{equation}
    \left\Vert\mathcal{P}_{\mathbb{S}_{+}^{h}}(\bm{z}) - \bm{z} \right\Vert_{2}^{2} - \left\Vert\bm{z}\right\Vert_{2}^{2} = -\left\Vert\mathcal{P}_{\mathbb{S}_{+}^{h}}(\bm{z})\right\Vert_{2}^{2}.
    \nonumber
\end{equation}
\end{proof}

Therefore, the results of \cref{cor: Moreau with proj} establish the following equivalent form of the dual function
\begin{equation}
    \begin{split}
        g(\bm{\lambda}) =~& -\frac{1}{2}\bm{\lambda}^{\top}\bm{A}_{\textnormal{f}}\bm{Q}^{-1} \bm{A}_{\textnormal{f}}^{\top}\bm{\lambda} + \bm{\lambda}^{\top}\left( \bm{A}_{\textnormal{f}}\bm{Q}^{-1}\bm{w}_{\textnormal{f}} + \bm{b}\right)- \frac{1}{2}\bm{w}_{\textnormal{f}}^{\top}\bm{Q}^{-1}\bm{w}_{\textnormal{f}} -\\
        & \frac{\rho}{2} \sum_{j=1}^{m_{\textnormal{sos}}}  \left\Vert\mathcal{P}_{\mathbb{S}_{+}^{h_{j}}}\left(\frac{1}{\rho}\left({\bm{A}_{\textnormal{sos}}^{(j)}}^{\top}\bm{\lambda} - \bm{w}_{\textnormal{sos}}^{(j)}\right)\right)\right\Vert_{2}^{2},
    \end{split}
    \label{eq:dual-function-lambda}
\end{equation}
which depends solely on the dual variable $\bm{\lambda}$.

\subsection{Smoothness of the Dual Problem}

The following lemma proves concavity and differentiability of the dual function $g$ in \cref{eq:dual-function-lambda}.

\begin{lemma}
    The dual function $g$, defined in \cref{eq:dual-function-lambda}, is concave and differentiable on $\mathbb{R}^{m}$.
    \label{lem: dual concave and differentiable}
\end{lemma}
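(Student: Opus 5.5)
I would split $g$ in \cref{eq:dual-function-lambda} into its quadratic part and the sum of projection terms, and treat each separately. The quadratic part
\[
q(\bm{\lambda}) = -\tfrac{1}{2}\bm{\lambda}^{\top}\bm{A}_{\textnormal{f}}\bm{Q}^{-1}\bm{A}_{\textnormal{f}}^{\top}\bm{\lambda} + \bm{\lambda}^{\top}(\bm{A}_{\textnormal{f}}\bm{Q}^{-1}\bm{w}_{\textnormal{f}} + \bm{b}) - \tfrac{1}{2}\bm{w}_{\textnormal{f}}^{\top}\bm{Q}^{-1}\bm{w}_{\textnormal{f}}
\]
is a polynomial, so it is differentiable. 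Since $\bm{Q}\succ 0$, the matrix $\bm{A}_{\textnormal{f}}\bm{Q}^{-1}\bm{A}_{\textnormal{f}}^{\top}$ is positive semidefinite, and therefore $q$ is concave.

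For the projection terms, I would work with $\varphi(\bm{z}) = \tfrac{1}{2}\|\mathcal{P}_{\mathbb{S}_{+}^{h}}(\bm{z})\|_{2}^{2}$, where $\bm{z}$ is identified with a symmetric matrix through svec/smat. By \cref{cor: Moreau with proj}, $\varphi(\bm{z}) = \tfrac{1}{2}\|\bm{z}\|_{2}^{2} - \tfrac{1}{2}\operatorname{dist}(\bm{z},\mathbb{S}_{+}^{h})^{2}$. Moreau's decomposition (\cref{thm_Moreau}) gives $\bm{z} - \mathcal{P}_{\mathbb{S}_{+}^{h}}(\bm{z}) = \mathcal{P}_{\mathbb{S}_{+}^{h,\circ}}(\bm{z})$, so $\varphi(\bm{z}) = \tfrac{1}{2}\operatorname{dist}(\bm{z},\mathbb{S}_{+}^{h,\circ})^{2}$. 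The squared distance to a closed convex set is convex. It is also differentiable, with gradient $\bm{z} - \mathcal{P}_{C}(\bm{z})$. I would prove that standard fact directly from the firm nonexpansiveness of the projection: expand $d_C(\bm{z}+\bm{h})^2 \le \|\bm{z}+\bm{h}-\mathcal{P}_C(\bm{z})\|^2$ for the upper bound, and use the variational inequality at $\mathcal{P}_C(\bm{z}+\bm{h})$ for the lower bound, which leaves a remainder of order $\|\bm{h}\|^2$. Hence $\varphi$ is convex and differentiable with $\nabla\varphi(\bm{z}) = \mathcal{P}_{\mathbb{S}_{+}^{h}}(\bm{z})$. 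As an alternative route to convexity, one can write $\varphi(\bm{z}) = \sup_{\bm{X}\in\mathbb{S}_{+}^{h}} \{\langle \bm{X},\bm{z}\rangle - \tfrac12\|\bm{X}\|^2\}$, which is a supremum of affine functions; this matches the minimization form in \cref{eq:complete-square-sos}.

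Each projection term in $g$ equals $-\rho\,\varphi\bigl(\rho^{-1}({\bm{A}_{\textnormal{sos}}^{(j)}}^{\top}\bm{\lambda} - \bm{w}_{\textnormal{sos}}^{(j)})\bigr)$. This is a convex differentiable function composed with an affine map and multiplied by a negative scalar, so it is concave and differentiable. By the chain rule its gradient is $-\bm{A}_{\textnormal{sos}}^{(j)}\mathcal{P}_{\mathbb{S}_{+}^{h_j}}(\cdot)$. Summing $q$ and these terms shows that $g$ is concave and differentiable on $\mathbb{R}^m$. As a byproduct, the gradient is
\[
\nabla g(\bm{\lambda}) = \bm{b} - \bm{A}_{\textnormal{f}}\mathcal{S}_{\bm{\xi}_{\textnormal{f}}}(\bm{\lambda}) - \sum_j \bm{A}_{\textnormal{sos}}^{(j)}\mathcal{S}_{\bm{\xi}_{\textnormal{sos}}^{(j)}}(\bm{\lambda}),
\]
which is consistent with the fact that $g$ is the dual function of a problem whose inner minimizer is unique, by Danskin's theorem.

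The main obstacle is the differentiability of $\tfrac12 d_C^2$. The projection is only Lipschitz and not smooth, so a direct differentiation of the eigenvalue formula for $\mathcal{P}_{\mathbb{S}_+^h}$ should be avoided. The two-sided bound from the variational inequality sidesteps this. The svec isometry ensures that everything transfers between $\mathbb{S}^h$ and the Euclidean space.
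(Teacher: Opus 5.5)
Your proof is correct and uses the same decomposition as the paper. The quadratic part is concave because $\bm{A}_{\textnormal{f}}\bm{Q}^{-1}\bm{A}_{\textnormal{f}}^{\top}\succeq 0$, and each projection term is the negative of a convex differentiable function composed with an affine map.

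The difference is in how you obtain the key fact about $\varphi(\bm{z})=\tfrac12\|\mathcal{P}_{\mathbb{S}_{+}^{h}}(\bm{z})\|_2^2$. The paper simply cites Theorem 3.1 of \cite{malick2004dual}. You instead prove it from \cref{thm_Moreau}: you write $\varphi(\bm{z})=\tfrac12\operatorname{dist}(\bm{z},-\mathbb{S}_{+}^{h})^2$ and apply the standard two-sided estimate for squared distance functions. Alternatively, the supremum-of-affine-functions representation gives convexity on its own.

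The paper's version is shorter but depends on an external result. Yours is self-contained, using only tools the paper already states. It also yields $\nabla\varphi=\mathcal{P}_{\mathbb{S}_{+}^{h}}$ along the way, and hence the gradient formula \cref{eq:gradient-dual}; the paper only obtains this later, in \cref{thm:gradient-lipschitz}, through a second citation to \cite{malick2004dual}. Combined with nonexpansiveness of the projection onto the product cone, your identification also gives the Lipschitz constant of that theorem directly.
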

\begin{proof}
    Note that the separability of $g$ in \cref{eq:dual-function-lambda} allows for a sequential analysis of the concavity and differentiability of each term. In fact, it holds that the expression for $g$ comprises a subtraction of a quadratic form in $\bm{\lambda}$, a linear function in $\bm{\lambda}$, a constant, and a sum of norms of projections onto positive semidefinite cones. Each of these components is analyzed separately.

    The term $-\frac{1}{2}\bm{\lambda}^{\top}\bm{A}_{\textnormal{f}}\bm{Q}^{-1} \bm{A}_{\textnormal{f}}^{\top}\bm{\lambda}$ is concave if the matrix $\bm{A}_{\textnormal{f}}\bm{Q}^{-1} \bm{A}_{\textnormal{f}}^{\top}$ is positive semidefinite. Because $\bm{Q} \in \mathbb{S}_{\succ 0}^{n_{\textnormal{f}}}$, its inverse is also positive definite and admits a decomposition $\bm{Q}^{-1} = \bm{B}^{\top}\bm{B}$. Let $\hat{\bm{B}} = \bm{B}\bm{A}_{\textnormal{f}}^{\top}$, then $\bm{A}_{\textnormal{f}}\bm{Q}^{-1} \bm{A}_{\textnormal{f}}^{\top} = \hat{\bm{B}}^{\top}\hat{\bm{B}} \in \mathbb{S}_{+}^{m}$. Hence, the quadratic part is concave.

    The linear term $\bm{\lambda}^{\top}\left( \bm{A}_{\textnormal{f}}\bm{Q}^{-1}\bm{w}_{\textnormal{f}} + \bm{b}\right)$ is both concave and convex, and the constant term $-\frac{1}{2}\bm{w}_{\textnormal{f}}^{\top}\bm{Q}^{-1}\bm{w}_{\textnormal{f}}$ does not affect concavity.  Moreover, the quadratic, linear, and constant terms are clearly differentiable on $\mathbb{R}^{m}$.

    Next, for each $j = 1,\dots,m_{\textnormal{sos}}$, the function $\frac{\rho}{2} \left\Vert\mathcal{P}_{\mathbb{S}_{+}^{h_{j}}}\left(\frac{1}{\rho}\left({\bm{A}_{\textnormal{sos}}^{(j)}}^{\top}\bm{\lambda} - \bm{w}_{\textnormal{sos}}^{(j)}\right)\right)\right\Vert_{2}^{2}$ is the squared norm of a projection, which is known to be convex and differentiable (see Theorem 3.1 in \cite{malick2004dual}). Consequently, its negative $- \frac{\rho}{2} \left\Vert\mathcal{P}_{\mathbb{S}_{+}^{h_{j}}}\left(\frac{1}{\rho}\left({\bm{A}_{\textnormal{sos}}^{(j)}}^{\top}\bm{\lambda} - \bm{w}_{\textnormal{sos}}^{(j)}\right)\right)\right\Vert_{2}^{2}$ is concave and differentiable.

    Since a sum of concave functions is concave, and each of the four components above is concave, it follows that $g$ is concave on $\mathbb{R}^{m}$. The same argument proves that $g$ is differentiable on $\mathbb{R}^{m}$.
\end{proof}

The dual function $g$ in \cref{eq:dual-function-lambda} is $L$-smooth on $\mathbb{R}^{m}$, as proved in the following theorem.

\begin{theorem}
    The gradient $\nabla g: \mathbb{R}^{m} \to \mathbb{R}^{m}$ of the dual function $g$ in \cref{eq:dual-function-lambda} is defined by
    \begin{equation}
        \nabla g(\bm{\lambda}) = -\bm{A}_{\textnormal{f}}\bm{Q}^{-1} \bm{A}_{\textnormal{f}}^{\top}\bm{\lambda} + \bm{A}_{\textnormal{f}}\bm{Q}^{-1}\bm{w}_{\textnormal{f}} + \bm{b} - \sum_{j=1}^{m_{\textnormal{sos}}} \bm{A}_{\textnormal{sos}}^{(j)} \mathcal{P}_{\mathbb{S}_{+}^{h_{j}}}\left(\frac{1}{\rho}\left({\bm{A}_{\textnormal{sos}}^{(j)}}^{\top}\bm{\lambda} - \bm{w}_{\textnormal{sos}}^{(j)}\right)\right).
        \label{eq:gradient-dual}
    \end{equation}
    
    The gradient $\nabla g$ is Lipschitz continuous with constant $L$ given by

    \begin{equation}
        \begin{split}
            L = \left\Vert \bm{A}_{\textnormal{f}}\bm{Q}^{-1} \bm{A}_{\textnormal{f}}^{\top}\right\Vert_{2} + \frac{1}{\rho}  \left\Vert \sum_{j=1}^{m_{\textnormal{sos}}} \bm{A}_{\textnormal{sos}}^{(j)} {\bm{A}_{\textnormal{sos}}^{(j)}}^{\top} \right\Vert_{2}.
        \end{split}
        \label{eq:lipschitz-constant}
    \end{equation}
    
    \label{thm:gradient-lipschitz}
\end{theorem}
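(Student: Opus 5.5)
The proof splits along the decomposition of $g$ in \cref{eq:dual-function-lambda}: a smooth quadratic part and the sum of projection terms. I would compute the gradient and a Lipschitz bound for each part separately, then combine them with the triangle inequality.

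\textbf{Quadratic part.} The quadratic, linear and constant terms contribute $-\bm{A}_{\textnormal{f}}\bm{Q}^{-1}\bm{A}_{\textnormal{f}}^{\top}\bm{\lambda} + \bm{A}_{\textnormal{f}}\bm{Q}^{-1}\bm{w}_{\textnormal{f}} + \bm{b}$ to the gradient, by direct differentiation. Since $\bm{A}_{\textnormal{f}}\bm{Q}^{-1}\bm{A}_{\textnormal{f}}^{\top}$ is symmetric (positive semidefinite, as in \cref{lem: dual concave and differentiable}), this affine map is Lipschitz with constant $\Vert \bm{A}_{\textnormal{f}}\bm{Q}^{-1}\bm{A}_{\textnormal{f}}^{\top}\Vert_{2}$.

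\textbf{Projection terms.} I would use the standard fact (Theorem 3.1 in \cite{malick2004dual}, already invoked in \cref{lem: dual concave and differentiable}) that $\varphi(\bm{z}) = \tfrac12\Vert \mathcal{P}_{\mathbb{S}_{+}^{h}}(\bm{z})\Vert_{2}^{2}$ is differentiable with $\nabla\varphi(\bm{z}) = \mathcal{P}_{\mathbb{S}_{+}^{h}}(\bm{z})$. This can also be seen directly: by \cref{cor: Moreau with proj}, $\varphi(\bm{z}) = \tfrac12\Vert\bm{z}\Vert_2^2 - \tfrac12\operatorname{dist}^2(\bm{z},\mathbb{S}_{+}^{h})$, and the squared distance to a closed convex set has gradient $2(\bm{z}-\mathcal{P}(\bm{z}))$. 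Setting $\bm{z}_j(\bm{\lambda}) = \rho^{-1}({\bm{A}_{\textnormal{sos}}^{(j)}}^{\top}\bm{\lambda} - \bm{w}_{\textnormal{sos}}^{(j)})$, the chain rule gives
\[
\nabla_{\bm{\lambda}}\Bigl(\rho\,\varphi(\bm{z}_j(\bm{\lambda}))\Bigr) = \rho\cdot\tfrac1\rho\,\bm{A}_{\textnormal{sos}}^{(j)}\mathcal{P}_{\mathbb{S}_{+}^{h_j}}(\bm{z}_j(\bm{\lambda})) = \bm{A}_{\textnormal{sos}}^{(j)}\mathcal{P}_{\mathbb{S}_{+}^{h_j}}(\bm{z}_j(\bm{\lambda})).
\]
Subtracting these contributions yields \cref{eq:gradient-dual}.

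\textbf{Lipschitz bound for the projection terms.} This is the main obstacle, because a naive term-by-term bound gives $\rho^{-1}\sum_j\Vert\bm{A}_{\textnormal{sos}}^{(j)}\Vert_2^2$ rather than the sharper $\rho^{-1}\Vert\sum_j \bm{A}_{\textnormal{sos}}^{(j)}{\bm{A}_{\textnormal{sos}}^{(j)}}^{\top}\Vert_2$. To get the sharper constant I would stack the blocks: let $\bm{A}_{\textnormal{sos}} = [\bm{A}_{\textnormal{sos}}^{(1)}\ \cdots\ \bm{A}_{\textnormal{sos}}^{(m_{\textnormal{sos}})}]$ and let $\mathcal{P}_{K_{\textnormal{sos}}}$ be the blockwise projection onto the product cone. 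Then the sum in \cref{eq:gradient-dual} equals $\bm{A}_{\textnormal{sos}}\mathcal{P}_{K_{\textnormal{sos}}}(\rho^{-1}(\bm{A}_{\textnormal{sos}}^{\top}\bm{\lambda} - \bm{w}_{\textnormal{sos}}))$. Projection onto a closed convex set is nonexpansive, so for $\bm{\lambda},\bm{\mu}\in\mathbb{R}^m$ the difference of these terms is bounded by
\[
\Vert\bm{A}_{\textnormal{sos}}\Vert_2\cdot\tfrac1\rho\Vert\bm{A}_{\textnormal{sos}}^{\top}\Vert_2\,\Vert\bm{\lambda}-\bm{\mu}\Vert_2 = \tfrac1\rho\Vert\bm{A}_{\textnormal{sos}}\bm{A}_{\textnormal{sos}}^{\top}\Vert_2\,\Vert\bm{\lambda}-\bm{\mu}\Vert_2,
\]
using $\Vert\bm{A}_{\textnormal{sos}}\Vert_2^2 = \Vert\bm{A}_{\textnormal{sos}}\bm{A}_{\textnormal{sos}}^{\top}\Vert_2$ and $\bm{A}_{\textnormal{sos}}\bm{A}_{\textnormal{sos}}^{\top} = \sum_j \bm{A}_{\textnormal{sos}}^{(j)}{\bm{A}_{\textnormal{sos}}^{(j)}}^{\top}$.

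\textbf{Conclusion.} Adding the Lipschitz constants of the affine part and the projection part via the triangle inequality gives exactly \cref{eq:lipschitz-constant}.
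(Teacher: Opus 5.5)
Your proposal is correct and follows the same plan as the paper: you split $g$ into the affine-gradient part and the projection part, take the projection gradient from the result in \cite{malick2004dual}, bound the two Lipschitz constants separately, and add them. Stacking the blocks into $\bm{A}_{\textnormal{sos}}$ and using nonexpansiveness of the product-cone projection is a compact form of the paper's unit-vector, blockwise Cauchy--Schwarz chain, since both reduce to $\Vert\bm{A}_{\textnormal{sos}}\Vert_2^2 = \Vert\sum_j \bm{A}_{\textnormal{sos}}^{(j)}{\bm{A}_{\textnormal{sos}}^{(j)}}^{\top}\Vert_2$; your Moreau-based derivation of $\nabla\varphi(\bm{z}) = \mathcal{P}_{\mathbb{S}_{+}^{h}}(\bm{z})$ is a self-contained check of the cited theorem.
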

\begin{proof}
    By linearity of the differentiation, it holds that
    \begin{equation}
        \nabla g(\bm{\lambda}) = -\bm{A}_{\textnormal{f}}\bm{Q}^{-1} \bm{A}_{\textnormal{f}}^{\top}\bm{\lambda} + \bm{A}_{\textnormal{f}}\bm{Q}^{-1}\bm{w}_{\textnormal{f}} + \bm{b} - \frac{\rho}{2} \sum_{j=1}^{m_{\textnormal{sos}}} \frac{\partial}{\partial \bm{\lambda}}  \left\Vert\mathcal{P}_{\mathbb{S}_{+}^{h_{j}}}\left(\frac{1}{\rho}\left({\bm{A}_{\textnormal{sos}}^{(j)}}^{\top}\bm{\lambda} - \bm{w}_{\textnormal{sos}}^{(j)}\right)\right)\right\Vert_{2}^{2}.
        \label{eq:gradient-aux}
    \end{equation}

    Theorems 2.2 and 3.2 in \cite{malick2004dual} proved that
    \begin{equation}
        - \frac{1}{2} \frac{\partial}{\partial \bm{y}}  \left\Vert\mathcal{P}_{\mathcal{K}}\left(\bm{c} + \mathcal{A}^{*} \bm{y}\right)\right\Vert^{2} = - \mathcal{A}\mathcal{P}_{\mathcal{K}}\left(\bm{c} + \mathcal{A}^{*} \bm{y}\right).
    \end{equation}

    Set $\bm{y} = \bm{\lambda}$, $\mathcal{A} = \frac{1}{\rho}\bm{A}_{\textnormal{sos}}^{(j)}$, $\bm{c} = -\frac{1}{\rho}\bm{w}_{\textnormal{sos}}^{(j)}$, and $\mathcal{K} = \mathbb{S}_{+}^{h_{j}}$ for all $j=1,\dots,m_{\textnormal{sos}}$, it follows that
    \begin{equation}
    \begin{aligned}
        - \frac{\rho}{2}
          \sum_{j=1}^{m_{\textnormal{sos}}}
          \frac{\partial}{\partial \bm{\lambda}}
          \left\Vert
              \mathcal{P}_{\mathbb{S}_{+}^{h_{j}}}\!\left(
                  \frac{1}{\rho}
                  \left(
                      {\bm{A}_{\textnormal{sos}}^{(j)}}^{\top}\bm{\lambda}
                      - \bm{w}_{\textnormal{sos}}^{(j)}
                  \right)
              \right)
          \right\Vert_{2}^{2}
        \\
        =\;
        - \sum_{j=1}^{m_{\textnormal{sos}}}
            \bm{A}_{\textnormal{sos}}^{(j)}
            \mathcal{P}_{\mathbb{S}_{+}^{h_{j}}}\!\left(
                \frac{1}{\rho}
                \left(
                    {\bm{A}_{\textnormal{sos}}^{(j)}}^{\top}\bm{\lambda}
                    - \bm{w}_{\textnormal{sos}}^{(j)}
                \right)
            \right).
    \end{aligned}
    \end{equation}
    Substituting this into \cref{eq:gradient-aux} yields \cref{eq:gradient-dual} for the gradient of the dual function.

    Next, we want to show that the gradient of the dual function $\nabla g(\bm{\lambda})$ is Lipschitz-continuous, and to compute a tight Lipschitz constant. The gradient expression in \cref{eq:gradient-dual} has two parts: a linear term coming from the free variables and a projection term coming from the conic constrained variables. We treat each part separately.

    The free-variable contribution to the gradient is $-\bm{A}_{\textnormal{f}}\bm{Q}^{-1} \bm{A}_{\textnormal{f}}^{\top}\bm{\lambda}$. Then, for any $\bm{\alpha}, \bm{\beta} \in \mathbb{R}^{m}$, by Cauchy-Schwarz,
    \begin{equation}
        \left\Vert -\bm{A}_{\textnormal{f}}\bm{Q}^{-1} \bm{A}_{\textnormal{f}}^{\top}\bm{\alpha} + \bm{A}_{\textnormal{f}}\bm{Q}^{-1} \bm{A}_{\textnormal{f}}^{\top}\bm{\beta} \right\Vert_{2} \leq \left\Vert \bm{A}_{\textnormal{f}}\bm{Q}^{-1} \bm{A}_{\textnormal{f}}^{\top} \right\Vert_{2} \left\Vert \bm{\alpha} - \bm{\beta} \right\Vert_{2}.
        \label{eq:lipschitz-linear}
    \end{equation}
    
    The projection of symmetric matrices onto the cone $\mathbb{S}_{+}^{h}$ is nonexpansive in the Frobenius norm and, due to the isometry between the spaces $(\mathbb{R}^{h(h + 1)/2}, \Vert \cdot \Vert_{2})$ and $(\mathbb{S}^{h}, \Vert \cdot \Vert_{\textnormal{F}})$, it holds that
    \begin{equation}
    \begin{aligned}
        \left\Vert
            \mathcal{P}_{\mathbb{S}_{+}^{h_{j}}}\!\left(
                \frac{1}{\rho}
                \left(
                    {\bm{A}_{\textnormal{sos}}^{(j)}}^{\top}\bm{\alpha}
                    - \bm{w}_{\textnormal{sos}}^{(j)}
                \right)
            \right)
            -
            \mathcal{P}_{\mathbb{S}_{+}^{h_{j}}}\!\left(
                \frac{1}{\rho}
                \left(
                    {\bm{A}_{\textnormal{sos}}^{(j)}}^{\top}\bm{\beta}
                    - \bm{w}_{\textnormal{sos}}^{(j)}
                \right)
            \right)
        \right\Vert_{2}
        \\
        \leq\;
        \left\Vert
            \frac{1}{\rho}\,
            {\bm{A}_{\textnormal{sos}}^{(j)}}^{\top}
            (\bm{\alpha} - \bm{\beta})
        \right\Vert_{2}.
    \end{aligned}
    \label{eq:nonexp}
    \end{equation}
    
    Now, define $\Delta \in \mathbb{R}^{m}$ as follows
    \[
        \Delta \coloneqq \sum_{j=1}^{m_{\textnormal{sos}}} \bm{A}_{\textnormal{sos}}^{(j)} \left [ \mathcal{P}_{\mathbb{S}_{+}^{h_{j}}}\left(\frac{1}{\rho}\left({\bm{A}_{\textnormal{sos}}^{(j)}}^{\top}\bm{\alpha} - \bm{w}_{\textnormal{sos}}^{(j)}\right)\right) -  \mathcal{P}_{\mathbb{S}_{+}^{h_{j}}}\left(\frac{1}{\rho}\left({\bm{A}_{\textnormal{sos}}^{(j)}}^{\top}\bm{\beta} - \bm{w}_{\textnormal{sos}}^{(j)}\right)\right) \right ],
    \]
    and $\Gamma \coloneqq \sum_{j=1}^{m_{\textnormal{sos}}} \bm{A}_{\textnormal{sos}}^{(j)} {\bm{A}_{\textnormal{sos}}^{(j)}}^{\top}$. The Euclidean norm of $\Delta$ is $\| \Delta \|_{2} = \max_{\| \bm{z} \|_{2} = 1} {\bm{z}}^{\top} \Delta$, with $\bm{z} \in \mathbb{R}^{m}$. For any $\| \bm{z} \|_{2} = 1$ it follows that,
    \begin{equation}
    \begin{aligned}
        &{\bm{z}}^{\top} \Delta
        = \sum_{j=1}^{m_{\textnormal{sos}}}
           {\left({\bm{A}_{\textnormal{sos}}^{(j)}}^{\top} \bm{z}\right)}^{\top}
           \Bigl[
               \mathcal{P}_{\mathbb{S}_{+}^{h_{j}}}\!\left(
                   \frac{1}{\rho}
                   \left(
                       {\bm{A}_{\textnormal{sos}}^{(j)}}^{\top}\bm{\alpha}
                       - \bm{w}_{\textnormal{sos}}^{(j)}
                   \right)
               \right)
               -
               \mathcal{P}_{\mathbb{S}_{+}^{h_{j}}}\!\left(
                   \frac{1}{\rho}
                   \left(
                       {\bm{A}_{\textnormal{sos}}^{(j)}}^{\top}\bm{\beta}
                       - \bm{w}_{\textnormal{sos}}^{(j)}
                   \right)
               \right)
           \Bigr] \\
        &\stackrel{\text{C.S.}}{\leq}
           \sum_{j=1}^{m_{\textnormal{sos}}}
           \left\|{\bm{A}_{\textnormal{sos}}^{(j)}}^{\top} \bm{z}\right\|_{2}
           \left\|
               \mathcal{P}_{\mathbb{S}_{+}^{h_{j}}}\!\left(
                   \frac{1}{\rho}
                   \left(
                       {\bm{A}_{\textnormal{sos}}^{(j)}}^{\top}\bm{\alpha}
                       - \bm{w}_{\textnormal{sos}}^{(j)}
                   \right)
               \right)
               -
               \mathcal{P}_{\mathbb{S}_{+}^{h_{j}}}\!\left(
                   \frac{1}{\rho}
                   \left(
                       {\bm{A}_{\textnormal{sos}}^{(j)}}^{\top}\bm{\beta}
                       - \bm{w}_{\textnormal{sos}}^{(j)}
                   \right)
               \right)
           \right\|_{2} \\
        &\stackrel{\cref{eq:nonexp}}{\leq}
           \frac{1}{\rho}
           \sum_{j=1}^{m_{\textnormal{sos}}}
           \left\|{\bm{A}_{\textnormal{sos}}^{(j)}}^{\top} \bm{z}\right\|_{2}
           \left\|
               {\bm{A}_{\textnormal{sos}}^{(j)}}^{\top}(\bm{\alpha} - \bm{\beta})
           \right\|_{2} \\
        &\stackrel{\text{C.S.}}{\leq}
           \frac{1}{\rho}
           \left(
               \sum_{j=1}^{m_{\textnormal{sos}}}
               \left\|{\bm{A}_{\textnormal{sos}}^{(j)}}^{\top} \bm{z}\right\|_{2}^{2}
           \right)^{1/2}
           \left(
               \sum_{j=1}^{m_{\textnormal{sos}}}
               \left\|
                   {\bm{A}_{\textnormal{sos}}^{(j)}}^{\top}(\bm{\alpha} - \bm{\beta})
               \right\|_{2}^{2}
           \right)^{1/2} \\
        &= \frac{1}{\rho}
           \left( {\bm{z}}^{\top} \Gamma \bm{z} \right)^{1/2}
           \left( {(\bm{\alpha} - \bm{\beta})}^{\top} \Gamma (\bm{\alpha} - \bm{\beta}) \right)^{1/2} \\
        &\leq
           \frac{1}{\rho}
           \|\Gamma\|_{2}^{1/2}
           \|\Gamma\|_{2}^{1/2}
           \|\bm{\alpha} - \bm{\beta}\|_{2} = \frac{1}{\rho}\,\|\Gamma\|_{2}\,\|\bm{\alpha} - \bm{\beta}\|_{2},
    \end{aligned}
    \end{equation}
    where C.S is the Cauchy-Schwarz inequality and the last inequality comes from ${\bm{z}}^{\top}\Gamma\bm{z}\le\|\Gamma\|_{2}\|\bm{z}\|_{2}^{2}$ for $\| \bm{z} \|_{2} = 1$.
    
    This inequality implies that,
    \begin{equation}
        \| \Delta \|_{2} \leq \frac{1}{\rho} \| \Gamma \|_{2} \| \bm{\alpha} - \bm{\beta} \|_{2} = \frac{1}{\rho} \left\Vert \sum_{j=1}^{m_{\textnormal{sos}}} \bm{A}_{\textnormal{sos}}^{(j)} {\bm{A}_{\textnormal{sos}}^{(j)}}^{\top} \right\Vert_{2} \left\Vert \bm{\alpha} - \bm{\beta} \right\Vert_{2},
        \label{eq:lipschitz-proj}
    \end{equation}
    which, by the definition of $\Delta$, defines the Lipschitz constant of the sum of projections in \cref{eq:gradient-dual}.
    
    By combining the bounds in \cref{eq:lipschitz-linear} and \cref{eq:lipschitz-proj}, it holds
    \begin{equation}
        \left\Vert \nabla g(\bm{\alpha}) - \nabla g(\bm{\beta}) \right\Vert_{2} \leq \left( \left\Vert \bm{A}_{\textnormal{f}}\bm{Q}^{-1} \bm{A}_{\textnormal{f}}^{\top} \right\Vert_{2} + \frac{1}{\rho} \left\Vert \sum_{j=1}^{m_{\textnormal{sos}}} \bm{A}_{\textnormal{sos}}^{(j)} {\bm{A}_{\textnormal{sos}}^{(j)}}^{\top} \right\Vert_{2} \right) \left\Vert \bm{\alpha} - \bm{\beta} \right\Vert_{2}
    \end{equation}
    which results in \cref{eq:lipschitz-constant} for the Lipschitz constant $L$ of the gradient.
\end{proof}

Because the dual function $g$ is concave and differentiable with Lipschitz gradient, the unconstrained maximization problem in Opt. \cref{opt:dual-problem-def} can be solved efficiently using \cref{alg:first-order-solver}. In this algorithm, we employ an accelerated gradient scheme on this maximization problem and, once an optimal (or nearly optimal) dual variable is obtained, the corresponding primal solution of \cref{opt:Reg-QCP} is recovered via the closed-form expressions in \cref{eq:free-sol} and \cref{eq:sos-sol}.

The next section provides the convergence of \cref{alg:first-order-solver} with respect to how the solutions of \cref{opt:Reg-QCP} are approached. In particular, it is shown that the dual iterates converge at the optimal rate, the primal variables become asymptotically feasible, and the objective value of the regularized problem \cref{opt:Reg-QCP} converges to that of the original problem of \cref{opt:QCP}.

\section{Convergence Analysis}\label{sec:convergence}

The accelerated gradient method in \cref{alg:first-order-solver} is selected specifically for its optimal convergence rate among first-order methods. The following lemma provides this rate, demonstrating how fast the dual objective value approaches its optimum.

\begin{theorem}[Convergence of the Accelerated Gradient Scheme]
    Consider the concave dual function $g$ in \cref{eq:dual-function-lambda} whose gradient is $L$-Lipschitz continuous. Let $\bm{\lambda}^\star$ be a maximizer of $g$. For any initial point $\bm{\lambda}^{(0)} \in \mathbb{R}^m$, step size $\eta \in (0, 1/L]$, and initialization
    \[
    t^{(0)} = 1, \qquad \bm{y}^{(0)} = \bm{\lambda}^{(0)},
    \]
    the iterates generated by the accelerated gradient ascent scheme
    \begin{equation}
        \begin{split}
            t^{(k)} &= \frac{1}{2} + \sqrt{\frac{1}{4} + \bigl(t^{(k-1)}\bigr)^2}, \quad \bm{\lambda}^{(k)} = \bm{y}^{(k-1)} + \eta \nabla g(\bm{y}^{(k-1)}), \\
            \bm{y}^{(k)} &= \bm{\lambda}^{(k)} + \frac{t^{(k-1)} - 1}{t^{(k)}} \bigl( \bm{\lambda}^{(k)} - \bm{\lambda}^{(k-1)} \bigr),
        \end{split}
        \nonumber
    \end{equation}
    for $k = 1, 2, \dots$ satisfy the convergence bound after $N$ iterations
    \[
        g(\bm{\lambda}^\star) - g(\bm{\lambda}^{(N)}) \leq \frac{2L \|\bm{\lambda}^{(0)} - \bm{\lambda}^\star\|_2^2}{(N+1)^2},
    \]
    provided $\eta = 1/L$.
    \label{thm:gradient-rate}
\end{theorem}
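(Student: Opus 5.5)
The plan is to reduce the statement to the classical Nesterov/Beck--Teboulle (FISTA) analysis, with no proximal term, applied to the convex function $f \coloneqq -g$. By \cref{lem: dual concave and differentiable} and \cref{thm:gradient-lipschitz}, $f$ is convex and differentiable with $L$-Lipschitz gradient. The recursion $t^{(k)} = \tfrac12 + \sqrt{\tfrac14 + (t^{(k-1)})^2}$ is the FISTA rule $t^{(k)} = \tfrac{1}{2}(1+\sqrt{1+4(t^{(k-1)})^2})$. It is characterized by the identity $(t^{(k)})^2 - t^{(k)} = (t^{(k-1)})^2$, which is the algebraic fact the telescoping argument needs. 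The restart branch of \cref{alg:first-order-solver} is not part of the statement, so I analyze only the pure scheme.

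\textbf{Step 1 (key inequality).} Let $\bm{\lambda}^{+} = \bm{y} + \tfrac1L\nabla g(\bm{y})$. Combining the descent lemma for $L$-smooth functions with concavity, $g(\bm{x}) \le g(\bm{y}) + \nabla g(\bm{y})^\top(\bm{x}-\bm{y})$, I will show that for all $\bm{x},\bm{y}$,
\[
g(\bm{\lambda}^{+}) - g(\bm{x}) \ge \tfrac{L}{2}\|\bm{\lambda}^{+}-\bm{y}\|_2^2 + L(\bm{y}-\bm{x})^\top(\bm{\lambda}^{+}-\bm{y}).
\]

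\textbf{Step 2 (telescoping).} Set $v_k = g(\bm{\lambda}^\star) - g(\bm{\lambda}^{(k)})$ and $\bm{u}_k = t^{(k-1)}\bm{\lambda}^{(k)} - (t^{(k-1)}-1)\bm{\lambda}^{(k-1)} - \bm{\lambda}^\star$.
\begin{enumerate}
\item Apply Step~1 with $\bm{y} = \bm{y}^{(k)}$, $\bm{\lambda}^{+} = \bm{\lambda}^{(k+1)}$, once with $\bm{x} = \bm{\lambda}^{(k)}$ and once with $\bm{x} = \bm{\lambda}^\star$.
\item Multiply the first inequality by $t^{(k)}-1$ and add the second.
\item Multiply the result by $t^{(k)}$ and use $(t^{(k)})^2 - t^{(k)} = (t^{(k-1)})^2$.
\item Complete the square via $\|\bm{a}\|^2 + 2\bm{a}^\top\bm{b} = \|\bm{a}+\bm{b}\|^2 - \|\bm{b}\|^2$.
\item Use the momentum definition $t^{(k)}\bm{y}^{(k)} = t^{(k)}\bm{\lambda}^{(k)} + (t^{(k-1)}-1)(\bm{\lambda}^{(k)}-\bm{\lambda}^{(k-1)})$ to identify the resulting vectors as $\bm{u}_{k+1}$ and $\bm{u}_k$.
\end{enumerate}
This yields the recursion
\[
\tfrac{2}{L}\bigl((t^{(k)})^2 v_{k+1} - (t^{(k-1)})^2 v_k\bigr) \le \|\bm{u}_k\|_2^2 - \|\bm{u}_{k+1}\|_2^2 .
\]

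\textbf{Step 3 (base case).} The first gradient step uses $\bm{y}^{(0)} = \bm{\lambda}^{(0)}$. Step~1 with $\bm{x} = \bm{\lambda}^\star$ gives
\[
\tfrac{2}{L}v_1 \le \|\bm{\lambda}^{(0)}-\bm{\lambda}^\star\|_2^2 - \|\bm{\lambda}^{(1)}-\bm{\lambda}^\star\|_2^2,
\]
and $\bm{u}_1 = \bm{\lambda}^{(1)}-\bm{\lambda}^\star$ since $t^{(0)} = 1$. Summing the recursion from $k=1$ to $N-1$ then gives $(t^{(N-1)})^2 v_N \le \tfrac{L}{2}\|\bm{\lambda}^{(0)}-\bm{\lambda}^\star\|_2^2$.

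\textbf{Step 4 (growth of $t$).} By induction, $t^{(k)} \ge t^{(k-1)} + \tfrac12$, so $t^{(k)} \ge (k+2)/2$. In particular $t^{(N-1)} \ge (N+1)/2$, and dividing through gives the claimed bound $2L\|\bm{\lambda}^{(0)}-\bm{\lambda}^\star\|_2^2/(N+1)^2$.

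\textbf{Main obstacle.} The hardest part is Step~2. The algebra must align exactly with the paper's indexing: $\bm{\lambda}^{(k)}$ is produced from $\bm{y}^{(k-1)}$, and the momentum weight is $(t^{(k-1)}-1)/t^{(k)}$, which is shifted relative to Beck--Teboulle's notation. I would first rewrite the scheme in Beck--Teboulle notation to check the shift, then verify the identity defining $\bm{u}_{k+1}$ directly from the update rules.
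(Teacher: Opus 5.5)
Your proposal is correct and follows the same route as the paper, whose proof only defers to the classical accelerated-gradient analysis of \cite{nesterov1983method}; you spell that argument out in its Beck--Teboulle form applied to $-g$. Your index shift ($t^{(k-1)}$ playing the role of $t_k$, $\bm{y}^{(k-1)}$ of $y_k$), the base case with $t^{(0)}=1$, and the growth bound $t^{(N-1)}\ge (N+1)/2$ all check out, and you are right that the restart branch of \cref{alg:first-order-solver} lies outside the statement.
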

\begin{proof}
    The proof follows similar arguments from the classical accelerated scheme in \cite{nesterov1983method}, adapted for the maximization of smooth concave functions.
\end{proof}

From this convergence theorem, \cref{alg:first-order-solver} inherits the convergence rate of $\mathcal{O}(1/N^{2})$ for solving the unconstrained concave dual problem in Opt. \cref{opt:dual-problem-def}, when selecting a step-size $\eta = 1/L$, where $L$ is the gradient's Lipschitz constant in \cref{eq:lipschitz-constant}.

The gradient in \cref{eq:gradient-dual} can be written using the closed-form primal solutions in \cref{eq:free-sol} and \cref{eq:sos-sol},
\begin{equation}
    \nabla g(\bm{\lambda}) = - \left( \bm{A}_{\textnormal{f}}\mathcal{S}_{\bm{\xi}_{\textnormal{f}}}(\bm{\lambda}) + \sum_{j=1}^{m_{\textnormal{sos}}} \bm{A}_{\textnormal{sos}}^{(j)} \mathcal{S}_{\bm{\xi}_{\textnormal{sos}}^{(j)}}(\bm{\lambda}) - \bm{b}\right),
    \label{eq:gradient-dual-feasibility}
\end{equation}
which is exactly the negative of the affine constraint evaluated at primal solutions. Then, the convergence of the dual objective value established in \cref{thm:gradient-rate} has a direct and significant consequence for the recovered primal solutions, since they are recovered once the dual problem is solved. In \cref{alg:first-order-solver}, the norm of the gradient of the dual function serves as a measure of primal feasibility with respect to the affine constraints. In the following theorem, it is proved that the norm of the gradient, and consequently the affine constraint violation, vanishes as the number of iterations increases.

\begin{theorem}[Asymptotic Feasibility of Primal Iterates]
    As the number of dual optimization iterations increases, the primal iterates achieve feasibility for \cref{opt:Reg-QCP}. This progress is measured by the norm of the dual function's gradient given in \cref{eq:gradient-dual-feasibility}, which drives the constraint violation to zero
    \begin{equation}
        \left \Vert \bm{A}_{\textnormal{f}}\mathcal{S}_{\bm{\xi}_{\textnormal{f}}}(\bm{\lambda}^{(N)}) + \sum_{j=1}^{m_{\textnormal{sos}}} \bm{A}_{\textnormal{sos}}^{(j)} \mathcal{S}_{\bm{\xi}_{\textnormal{sos}}^{(j)}}(\bm{\lambda}^{(N)}) - \bm{b} \right \Vert_{2} \to 0 \quad \text{as} \quad N \to \infty.
    \end{equation}
    \label{thm:feasibility}
\end{theorem}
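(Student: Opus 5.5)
The plan is to turn the function-value rate of \cref{thm:gradient-rate} into a gradient-norm bound through $L$-smoothness of $g$, and then to read \cref{eq:gradient-dual-feasibility} as an identity between $\nabla g$ and the affine constraint residual. By \cref{eq:gradient-dual-feasibility}, the quantity in \cref{thm:feasibility} is exactly $\|\nabla g(\bm{\lambda}^{(N)})\|_{2}$. It is therefore enough to show $\|\nabla g(\bm{\lambda}^{(N)})\|_{2}\to 0$.

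The key inequality is the standard descent-type bound for smooth concave functions. Since $\nabla g$ is $L$-Lipschitz (\cref{thm:gradient-lipschitz}), for any $\bm{\lambda}$ we have
\[
g\bigl(\bm{\lambda}+\tfrac{1}{L}\nabla g(\bm{\lambda})\bigr)\ \ge\ g(\bm{\lambda})+\tfrac{1}{2L}\|\nabla g(\bm{\lambda})\|_{2}^{2}.
\]
Because $g(\bm{\lambda}^\star)$ is the maximum, the left-hand side is at most $g(\bm{\lambda}^\star)$. This gives
\[
\|\nabla g(\bm{\lambda})\|_{2}^{2}\le 2L\bigl(g(\bm{\lambda}^\star)-g(\bm{\lambda})\bigr).
\]

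Setting $\bm{\lambda}=\bm{\lambda}^{(N)}$ and applying \cref{thm:gradient-rate} with $\eta=1/L$, we obtain
\[
\|\nabla g(\bm{\lambda}^{(N)})\|_{2}\le \frac{2L\|\bm{\lambda}^{(0)}-\bm{\lambda}^\star\|_{2}}{N+1}.
\]
The right-hand side tends to zero. This yields the claim, together with an explicit $\mathcal{O}(1/N)$ rate.

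The main obstacle is the standing assumption that a maximizer $\bm{\lambda}^\star$ exists, since \cref{thm:gradient-rate} presupposes one. I would state this as an assumption, for example strong duality together with dual attainment under a Slater-type condition for \cref{opt:Reg-QCP}, or equivalently boundedness of $g$ from above with the supremum attained. If attainment fails but $\sup g<\infty$, I would fall back on the weaker fact that accelerated methods still drive $g(\bm{\lambda}^{(N)})\to\sup g$. The same smoothness inequality, with $\sup g$ in place of $g(\bm{\lambda}^\star)$, then gives $\|\nabla g(\bm{\lambda}^{(N)})\|_2\to 0$ without a rate.

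A secondary remark concerns the restart step in \cref{alg:first-order-solver}. It only resets momentum and keeps the monotone-type guarantee, so the argument is applied to the scheme exactly as stated in \cref{thm:gradient-rate}.
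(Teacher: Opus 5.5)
Your proposal is correct and follows the paper's proof. Both arguments combine the smoothness inequality $\frac{1}{2L}\|\nabla g(\bm{\lambda}^{(N)})\|_2^2 \le g(\bm{\lambda}^\star)-g(\bm{\lambda}^{(N)})$ with the rate of \cref{thm:gradient-rate}, giving the same $\mathcal{O}(1/N)$ bound on the residual in \cref{eq:gradient-dual-feasibility}; you derive the inequality explicitly from the ascent step, whereas the paper only cites it. Your caveat about the existence of $\bm{\lambda}^\star$ is well placed: the paper assumes it here and only establishes it later, through the coercivity result \cref{thm:coercivity} under the Slater condition and invertibility of the $\bm{Z}^{(j)}$.
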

\begin{proof}
    For a concave function $g$ with an $L$-Lipschitz continuous gradient, the difference between the optimal value $g(\bm{\lambda}^{\star})$ and the value at any point $\bm{\lambda}^{(N)}$ is bounded below by the squared norm of the gradient at that point
    \[
        \frac{1}{2L}\Vert \nabla g(\bm{\lambda}^{(N)}) \Vert_{2}^{2} \leq  g(\bm{\lambda}^{\star}) -  g(\bm{\lambda}^{(N)}).
    \]

    Combining this inequality with the bound in \cref{thm:gradient-rate} yields
    \begin{equation}
        \Vert \nabla g(\bm{\lambda}^{(N)}) \Vert_{2} \leq \frac{2L \|\bm{\lambda}^{(0)} - \bm{\lambda}^\star\|_2}{(N + 1)}.
        \label{eq:convergence-feas}
    \end{equation}
    
    As $N \to \infty$, the right-hand side vanishes, which precisely corresponds to the feasibility of the primal iterates with respect to the affine constraints of \cref{opt:Reg-QCP} defined in \cref{eq:gradient-dual-feasibility}.
\end{proof}

We next prove that the dual function $g$ in \cref{eq:dual-function-lambda} is coercive, which is a powerful property that can be further used to establish boundedness for dual iterates in \cref{alg:first-order-solver}. The latter allows us to derive a non-asymptotic bound for suboptimality in terms of how close the objective of \cref{opt:Reg-QCP} is from the optimum value of the original problem of \cref{opt:QCP}.

\begin{theorem}[Coercivity of the Dual Function]
    Assume that the Slater condition holds for \cref{opt:Reg-QCP}; i.e., there exists a primal point $(\bar{\bm{\xi}}_{\textnormal{f}},\bar{\bm{\xi}}_{\textnormal{sos}}^{(1)},\dots,\bar{\bm{\xi}}_{\textnormal{sos}}^{(m_{\textnormal{sos}})})$ such that
    \begin{equation}
        \bm{A}_{\textnormal{f}}\bar{\bm{\xi}}_{\textnormal{f}} + \sum_{j=1}^{m_{\textnormal{sos}}}\bm{A}_{\textnormal{sos}}^{(j)}\bar{\bm{\xi}}_{\textnormal{sos}}^{(j)} = \bm{b}, \quad \bar{\bm{\xi}}_{\textnormal{sos}}^{(j)} \in \mathbb{S}_{\succ 0}^{h_{j}}, \quad \text{for all } j=1,\dots,m_{\textnormal{sos}}.
        \label{eq:slater}
    \end{equation}
    Assume also that the matrices
    \begin{equation}
        \bm{Z}^{(j)} = \bm{A}_{\textnormal{sos}}^{(j)} {\bm{A}_{\textnormal{sos}}^{(j)}}^{\top} \in \mathbb{S}^{m}, \quad \text{for all } j=1,\dots,m_{\textnormal{sos}},
    \end{equation}
    are each invertible. Then the dual function $g$ in \cref{eq:dual-function-lambda} is coercive, satisfying
    \begin{equation}
        \begin{split}
             g(\bm{\lambda}) \leq -\beta_{1}\Vert \bm{\lambda} \Vert_{2} + \beta_{2}, \quad \text{for all } \bm{\lambda} \in \mathbb{R}^{m},
        \end{split}
        \label{eq:coercivity}
    \end{equation}
    where $\beta_{1} > 0$ and $\beta_{2} > 0$ are given by
    \begin{equation}
        \beta_1 = \frac{1}{2}\sum_{j=1}^{m_{\textnormal{sos}}} \sigma_{\textnormal{min}}(\operatorname*{smat}(\bar{\bm{\xi}}_{\textnormal{sos}}^{(j)})) {\sqrt{\sigma_{\textnormal{min}}(\bm{Z}^{(j)}})},
        \label{eq:beta1}
    \end{equation}
    \begin{equation}
    \begin{aligned}
        \beta_2
        &= \frac{1}{2}
           \left\|
               \bm{Q}^{1/2}\bar{\bm{\xi}}_{\textnormal{f}}
               + \bm{Q}^{-1/2}\bm{w}_{\textnormal{f}}
           \right\|_2^{2} 
           + \frac{\rho}{2}
             \sum_{j=1}^{m_{\textnormal{sos}}}
             \left(
                 \left\|
                     \bar{\bm{\xi}}_{\textnormal{sos}}^{(j)}
                 \right\|_{2}
                 + \frac{1}{2}\,
                   \sigma_{\textnormal{min}}\!\left(
                       \operatorname*{smat}\bigl(
                           \bar{\bm{\xi}}_{\textnormal{sos}}^{(j)}
                       \bigr)
                   \right)
             \right)^{2} \\
        &\quad
           + \sum_{j=1}^{m_{\textnormal{sos}}}
             {\bm{w}_{\textnormal{sos}}^{(j)}}^{\top}
             \bar{\bm{\xi}}_{\textnormal{sos}}^{(j)}.
    \end{aligned}
    \label{eq:beta2}
    \end{equation}

    \label{thm:coercivity}
\end{theorem}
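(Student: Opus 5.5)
The plan is to bound $g$ from above by evaluating the partial Lagrangian at a single well-chosen feasible point of the cone. Since $g(\bm{\lambda}) = \min_{\bm{\xi}\in K}\mathcal{L}(\bm{\xi},\bm{\lambda})$, we have $g(\bm{\lambda}) \le \mathcal{L}(\tilde{\bm{\xi}}(\bm{\lambda}),\bm{\lambda})$ for any $\tilde{\bm{\xi}}(\bm{\lambda})\in K$. The point will be a $\bm{\lambda}$-dependent perturbation of the Slater point in \cref{eq:slater}. We keep $\tilde{\bm{\xi}}_{\textnormal{f}} = \bar{\bm{\xi}}_{\textnormal{f}}$ and set
\[
\tilde{\bm{\xi}}_{\textnormal{sos}}^{(j)} = \bar{\bm{\xi}}_{\textnormal{sos}}^{(j)} - \frac{\sigma_j}{2}\,\frac{{\bm{A}_{\textnormal{sos}}^{(j)}}^{\top}\bm{\lambda}}{\Vert {\bm{A}_{\textnormal{sos}}^{(j)}}^{\top}\bm{\lambda}\Vert_2},
\qquad \sigma_j \coloneqq \sigma_{\textnormal{min}}(\operatorname*{smat}(\bar{\bm{\xi}}_{\textnormal{sos}}^{(j)})).
\]
For $\bm{\lambda}=\bm{0}$ the bound is trivial, and invertibility of $\bm{Z}^{(j)}$ guarantees ${\bm{A}_{\textnormal{sos}}^{(j)}}^{\top}\bm{\lambda}\neq \bm{0}$ otherwise.

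The first step is feasibility. By the isometry between $\operatorname*{svec}$ and $\operatorname*{smat}$, the perturbation has Frobenius norm $\sigma_j/2$, hence spectral norm at most $\sigma_j/2$. Weyl's inequality then gives $\lambda_{\min}(\operatorname*{smat}\tilde{\bm{\xi}}_{\textnormal{sos}}^{(j)}) \ge \sigma_j/2>0$, so $\tilde{\bm{\xi}}\in K$.

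The second step expands $\mathcal{L}(\tilde{\bm{\xi}},\bm{\lambda})$ term by term. Because $\bm{A}\bar{\bm{\xi}}=\bm{b}$, the multiplier term becomes
\[
-\bm{\lambda}^{\top}(\bm{A}\tilde{\bm{\xi}}-\bm{b}) = -\sum_j \frac{\sigma_j}{2}\Vert{\bm{A}_{\textnormal{sos}}^{(j)}}^{\top}\bm{\lambda}\Vert_2 .
\]
Using $\Vert{\bm{A}_{\textnormal{sos}}^{(j)}}^{\top}\bm{\lambda}\Vert_2^2 = \bm{\lambda}^{\top}\bm{Z}^{(j)}\bm{\lambda}\ge \sigma_{\textnormal{min}}(\bm{Z}^{(j)})\Vert\bm{\lambda}\Vert_2^2$, this term is at most $-\beta_1\Vert\bm{\lambda}\Vert_2$ with $\beta_1$ as in \cref{eq:beta1}. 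Here $\beta_1>0$ because each $\sigma_j>0$ and each $\bm{Z}^{(j)}\succ 0$.

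The remaining terms do not grow with $\bm{\lambda}$, and we bound each one:
\begin{itemize}
\item The free part is $\tfrac12\bar{\bm{\xi}}_{\textnormal{f}}^{\top}\bm{Q}\bar{\bm{\xi}}_{\textnormal{f}}+\bm{w}_{\textnormal{f}}^{\top}\bar{\bm{\xi}}_{\textnormal{f}}$. Completing the square as in \cref{eq:complete-square-free}, it is at most $\tfrac12\Vert\bm{Q}^{1/2}\bar{\bm{\xi}}_{\textnormal{f}}+\bm{Q}^{-1/2}\bm{w}_{\textnormal{f}}\Vert_2^2$.
\item The regularizer satisfies $\tfrac{\rho}{2}\Vert\tilde{\bm{\xi}}_{\textnormal{sos}}^{(j)}\Vert_2^2 \le \tfrac{\rho}{2}(\Vert\bar{\bm{\xi}}_{\textnormal{sos}}^{(j)}\Vert_2+\sigma_j/2)^2$ by the triangle inequality.
\item The linear SOS term is ${\bm{w}_{\textnormal{sos}}^{(j)}}^{\top}\tilde{\bm{\xi}}_{\textnormal{sos}}^{(j)} = {\bm{w}_{\textnormal{sos}}^{(j)}}^{\top}\bar{\bm{\xi}}_{\textnormal{sos}}^{(j)} - \tfrac{\sigma_j}{2}{\bm{w}_{\textnormal{sos}}^{(j)}}^{\top}\bm{u}_j$, where $\bm{u}_j$ is the unit direction above.
\end{itemize}
Summing these gives \cref{eq:coercivity} with $\beta_2$ close to \cref{eq:beta2}.

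The main obstacle is the cross term $-\tfrac{\sigma_j}{2}{\bm{w}_{\textnormal{sos}}^{(j)}}^{\top}\bm{u}_j$, which does not appear in \cref{eq:beta2}. It is bounded by $\tfrac{\sigma_j}{2}\Vert\bm{w}_{\textnormal{sos}}^{(j)}\Vert_2$, a constant, so it can always be added to $\beta_2$ to obtain a valid coercivity bound. To recover exactly the stated $\beta_2$, I would first try to absorb it, either by enlarging the square $(\Vert\bar{\bm{\xi}}_{\textnormal{sos}}^{(j)}\Vert_2+\sigma_j/2)^2$ or by completing the square in $\tilde{\bm{\xi}}_{\textnormal{sos}}^{(j)}$ as in \cref{eq:complete-square-sos}. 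If neither works, I would state the result with the slightly larger constant $\beta_2+\sum_j\tfrac{\sigma_j}{2}\Vert\bm{w}_{\textnormal{sos}}^{(j)}\Vert_2$, which still gives coercivity.

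A minor point to check is positivity of $\beta_2$. If the constant is nonpositive, one may simply replace it by any larger positive constant.
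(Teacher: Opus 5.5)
Your multiplier-term computation has the wrong sign. Because $\bm{A}\bar{\bm{\xi}}=\bm{b}$ and $\tilde{\bm{\xi}}_{\textnormal{f}}=\bar{\bm{\xi}}_{\textnormal{f}}$, your point gives $\bm{A}\tilde{\bm{\xi}}-\bm{b}=-\sum_j \tfrac{\sigma_j}{2}\bm{A}_{\textnormal{sos}}^{(j)}\bm{u}_j$, with $\bm{u}_j={\bm{A}_{\textnormal{sos}}^{(j)}}^{\top}\bm{\lambda}/\|{\bm{A}_{\textnormal{sos}}^{(j)}}^{\top}\bm{\lambda}\|_2$. Hence $-\bm{\lambda}^{\top}(\bm{A}\tilde{\bm{\xi}}-\bm{b})=+\sum_j\tfrac{\sigma_j}{2}\|{\bm{A}_{\textnormal{sos}}^{(j)}}^{\top}\bm{\lambda}\|_2$. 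So $\mathcal{L}(\tilde{\bm{\xi}},\bm{\lambda})$ grows linearly in $\|\bm{\lambda}\|_2$, and the resulting upper bound on $g$ says nothing. To make the Lagrangian small, the residual must point along $+\bm{\lambda}$, so take $\tilde{\bm{\xi}}_{\textnormal{sos}}^{(j)}=\bar{\bm{\xi}}_{\textnormal{sos}}^{(j)}+\tfrac{\sigma_j}{2}\bm{u}_j$. Your Weyl feasibility argument and the regularizer bound do not depend on the sign. With this change the multiplier term is $-\sum_j\tfrac{\sigma_j}{2}\|{\bm{A}_{\textnormal{sos}}^{(j)}}^{\top}\bm{\lambda}\|_2\le-\beta_1\|\bm{\lambda}\|_2$, and your argument becomes essentially the paper's. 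The paper perturbs by $\tfrac{\delta^{(j)}}{\|\bm{\lambda}\|_2}{\bm{A}_{\textnormal{sos}}^{(j)}}^{\top}{\bm{Z}^{(j)}}^{-1}\bm{\lambda}$ with $\delta^{(j)}=\tfrac12\sigma_j\sqrt{\sigma_{\min}(\bm{Z}^{(j)})}$, which makes $\bm{A}\hat{\bm{\xi}}-\bm{b}$ an exact positive multiple of $\bm{\lambda}$. Your (corrected) direction gives a bound at least as strong and uses $\bm{Z}^{(j)}$ only through $\|{\bm{A}_{\textnormal{sos}}^{(j)}}^{\top}\bm{\lambda}\|_2^2=\bm{\lambda}^{\top}\bm{Z}^{(j)}\bm{\lambda}$, without inverting it.

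Your worry about the cross term is justified, and no absorption trick can remove it, because the inequality with the stated $\beta_2$ is false. Take $m=m_{\textnormal{sos}}=h_1=n_{\textnormal{f}}=1$, $\bm{A}_{\textnormal{f}}=0$, $\bm{A}_{\textnormal{sos}}^{(1)}=1$, $\bm{b}=1$, $\bm{Q}=1$, $\bm{w}_{\textnormal{f}}=0$, $\bm{w}_{\textnormal{sos}}^{(1)}=10$, $\rho=1$, with Slater point $\bar{\bm{\xi}}_{\textnormal{f}}=0$, $\bar{\bm{\xi}}_{\textnormal{sos}}^{(1)}=1$. Then $\bm{Z}^{(1)}=1$, $\beta_1=\tfrac12$, $\beta_2=\tfrac98+10$, and \cref{eq:dual-function-lambda} gives $g(\lambda)=\lambda-\tfrac12\max\{0,\lambda-10\}^2$. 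So $g(10)=10$, while $-\beta_1\cdot 10+\beta_2=6.125$. Adding your extra constant $\tfrac{\sigma_1}{2}\|\bm{w}_{\textnormal{sos}}^{(1)}\|_2=5$ restores the bound, and it is then attained with equality at $\lambda=11.5$.

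The paper's proof has exactly this omission. Its perturbation contributes $\tfrac{\delta^{(j)}}{\|\bm{\lambda}\|_2}{\bm{w}_{\textnormal{sos}}^{(j)}}^{\top}{\bm{A}_{\textnormal{sos}}^{(j)}}^{\top}{\bm{Z}^{(j)}}^{-1}\bm{\lambda}$ to $\mathcal{L}(\hat{\bm{\xi}},\bm{\lambda})$, and this term is silently dropped when the proof asserts that all remaining terms are bounded by $\beta_2$. Your fallback constant $\beta_2+\sum_j\tfrac{\sigma_j}{2}\|\bm{w}_{\textnormal{sos}}^{(j)}\|_2$ is therefore the correct statement, not a weakening. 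Your caveat on positivity is also right: with $\bm{w}_{\textnormal{sos}}^{(1)}=-10$ in the same example, $\beta_2<0$, contrary to the theorem's claim. This is harmless for coercivity itself.
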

\begin{proof}
    Since $g(\bm{\lambda}) \leq \mathcal{L}(\bm{\xi}, \bm{\lambda})$ holds for any $\bm{\xi} \in K$, it suffices to construct a $\bm{\lambda}$-dependent primal point $\hat{\bm{\xi}} \in K$ such that $\mathcal{L}(\hat{\bm{\xi}}, \bm{\lambda})$ satisfies the bound in \cref{eq:coercivity}.

    Let $\delta^{(j)} = \frac{1}{2} \sigma_{\textnormal{min}}(\operatorname*{smat}(\bar{\bm{\xi}}_{\textnormal{sos}}^{(j)})) {\sqrt{\sigma_{\textnormal{min}}(\bm{Z}^{(j)}})} > 0$, where positivity follows from the Slater condition in \cref{eq:slater}. Define $\hat{\bm{\xi}}_{\textnormal{f}} = \bar{\bm{\xi}}_{\textnormal{f}}$ and the perturbed variables
    \begin{equation}
        \hat{\bm{\xi}}_{\textnormal{sos}}^{(j)} = \bar{\bm{\xi}}_{\textnormal{sos}}^{(j)} + \frac{\delta^{(j)}}{\|\bm{\lambda}\|_{2}} {\bm{A}_{\textnormal{sos}}^{(j)}}^{\top} {\bm{Z}^{(j)}}^{-1} \bm{\lambda}, \quad \text{for all } j=1,\dots,m_{\textnormal{sos}}.
        \nonumber
    \end{equation}

    It is now shown that $\|\hat{\bm{\xi}}_{\textnormal{sos}}^{(j)}\|_2$ is bounded by a constant independent of $\bm{\lambda}$ and that $\hat{\bm{\xi}}_{\textnormal{sos}}^{(j)} \in \mathbb{S}^{h_j}_{\succ 0}$, so that $\hat{\bm{\xi}} \in K$ is a feasible point.
    
    By the triangle inequality and Cauchy-Schwarz,
    \begin{equation}
    \begin{aligned}
        \|\hat{\bm{\xi}}_{\textnormal{sos}}^{(j)}\|_2
        &\leq
        \|\bar{\bm{\xi}}_{\textnormal{sos}}^{(j)}\|_2
        + \frac{\delta^{(j)}}{\|\bm{\lambda}\|_2}
          \left\|
              {\bm{A}_{\textnormal{sos}}^{(j)}}^{\top}
              {\bm{Z}^{(j)}}^{-1}
              \bm{\lambda}
          \right\|_2
        \\
        &= 
        \|\bar{\bm{\xi}}_{\textnormal{sos}}^{(j)}\|_2
        + \frac{\delta^{(j)}}{\|\bm{\lambda}\|_2}
          \sqrt{
              \bm{\lambda}^{\top}
              {\bm{Z}^{(j)}}^{-1}
              \bm{\lambda}
          }
        \\
        &\leq
        \|\bar{\bm{\xi}}_{\textnormal{sos}}^{(j)}\|_2
        + \frac{\delta^{(j)}}{
            \sqrt{
                \sigma_{\textnormal{min}}\!\left(
                    \bm{Z}^{(j)}
                \right)
            }
          }.
    \end{aligned}
    \end{equation}

    where the last inequality follows from $\bm{\lambda}^\top {\bm{Z}^{(j)}}^{-1}\bm{\lambda} \leq \|\bm{\lambda}\|_2^2/\sigma_{\textnormal{min}}(\bm{Z}^{(j)})$. Substituting $\delta^{(j)}$ gives
    \begin{equation}
        \|\hat{\bm{\xi}}_{\textnormal{sos}}^{(j)}\|_{2} \leq \|\bar{\bm{\xi}}_{\textnormal{sos}}^{(j)}\|_{2} + \frac{1}{2} \sigma_{\textnormal{min}}(\operatorname*{smat}(\bar{\bm{\xi}}_{\textnormal{sos}}^{(j)})).
        \nonumber
    \end{equation}
    which is a constant independent of $\bm{\lambda}$.

    For any $\bm{v} \in \mathbb{R}^{h_j}$, decompose the quadratic form as
    \begin{equation}
        \bm{v}^\top \operatorname*{smat}(\hat{\bm{\xi}}_{\textnormal{sos}}^{(j)})\bm{v} = \bm{v}^\top \operatorname*{smat}(\bar{\bm{\xi}}_{\textnormal{sos}}^{(j)})\bm{v} 
        + \frac{\delta^{(j)}}{\|\bm{\lambda}\|_2}
        \bm{v}^\top \operatorname*{smat}(\bm{A}_{\textnormal{sos}}^{(j)\top} {\bm{Z}^{(j)}}^{-1}\bm{\lambda})\bm{v}.
        \nonumber
    \end{equation}
    
    A lower bound for the perturbation term is obtained using $\sigma_{\textnormal{min}}(\bm{M}) \geq -\|\bm{M}\|_{\textnormal{F}}$ for any symmetric matrix $\bm{M}$,
    \begin{equation}
    \begin{aligned}
        \frac{\delta^{(j)}}{\|\bm{\lambda}\|_2}\,
        \bm{v}^{\top}
        \operatorname*{smat}\!\left(
            {\bm{A}_{\textnormal{sos}}^{(j)}}^{\top}
            {\bm{Z}^{(j)}}^{-1}
            \bm{\lambda}
        \right)
        \bm{v}
        &\geq
        -\frac{\delta^{(j)}}{\|\bm{\lambda}\|_2}
         \left\|
             {\bm{A}_{\textnormal{sos}}^{(j)}}^{\top}
             {\bm{Z}^{(j)}}^{-1}
             \bm{\lambda}
         \right\|_2
         \|\bm{v}\|_2^{2}
        \\
        &\geq
        -\frac{\delta^{(j)}}{
            \sqrt{
                \sigma_{\textnormal{min}}\!\left(
                    \bm{Z}^{(j)}
                \right)
            }
         }
         \|\bm{v}\|_2^{2},
    \end{aligned}
    \end{equation}

    where the last inequality follows from the norm bound derived above. 
    
    Therefore, using $\bm{v}^\top \operatorname*{smat}(\bar{\bm{\xi}}_{\textnormal{sos}}^{(j)})\bm{v} \geq \sigma_{\textnormal{min}}(\operatorname*{smat}(\bar{\bm{\xi}}_{\textnormal{sos}}^{(j)}))\|\bm{v}\|_2^2$ and substituting $\delta^{(j)}$, it holds that
    \begin{equation}
    \begin{aligned}
        \bm{v}^{\top}
        \operatorname*{smat}\!\left(
            \hat{\bm{\xi}}_{\textnormal{sos}}^{(j)}
        \right)
        \bm{v}
        &\geq
        \left(
            \sigma_{\min}\!\left(
                \operatorname*{smat}\!\left(
                    \bar{\bm{\xi}}_{\textnormal{sos}}^{(j)}
                \right)
            \right)
            -
            \frac{\delta^{(j)}}{
                \sqrt{
                    \sigma_{\textnormal{min}}\!\left(
                        \bm{Z}^{(j)}
                    \right)
                }
            }
        \right)
        \|\bm{v}\|_{2}^{2}
        \\
        &= 
        \frac{1}{2}\,
        \sigma_{\min}\!\left(
            \operatorname*{smat}\!\left(
                \bar{\bm{\xi}}_{\textnormal{sos}}^{(j)}
            \right)
        \right)
        \|\bm{v}\|_{2}^{2}
        \\
        &> 0,
    \end{aligned}
    \end{equation}

    which proves that $\hat{\bm{\xi}}_{\textnormal{sos}}^{(j)} \in \mathbb{S}^{h_j}_{\succ 0}$, and hence $\hat{\bm{\xi}} \in K$.

    Substituting $\hat{\bm{\xi}}$ into the affine constraint gives
    \begin{equation}
        \bm{A}\hat{\bm{\xi}} - \bm{b} 
        = \underbrace{\bm{A}_{\textnormal{f}}\bar{\bm{\xi}}_{\textnormal{f}} 
        + \sum_{j=1}^{m_{\textnormal{sos}}} \bm{A}_{\textnormal{sos}}^{(j)}\bar{\bm{\xi}}_{\textnormal{sos}}^{(j)} 
        - \bm{b}}_{=\,\bm{0}\;\textnormal{(Slater condition)}} 
        + \sum_{j=1}^{m_{\textnormal{sos}}} \frac{\delta^{(j)}}{\|\bm{\lambda}\|_2} 
        \underbrace{\bm{A}_{\textnormal{sos}}^{(j)}{\bm{A}_{\textnormal{sos}}^{(j)}}^{\top}}_{=\,\bm{Z}^{(j)}} 
        {\bm{Z}^{(j)}}^{-1}\bm{\lambda} 
        = \frac{\sum_{j=1}^{m_{\textnormal{sos}}}\delta^{(j)}}{\|\bm{\lambda}\|_2}\bm{\lambda},
        \nonumber
    \end{equation}
    so that the constraint term in the partial Lagrangian evaluates to
    \begin{equation}
        -\bm{\lambda}^\top(\bm{A}\hat{\bm{\xi}} - \bm{b}) = -\sum_{j=1}^{m_{\textnormal{sos}}}\delta^{(j)}\|\bm{\lambda}\|_2 = -\beta_1\|\bm{\lambda}\|_2.
        \label{eq:constraint-term}
    \end{equation}
    
    Since $\hat{\bm{\xi}}_{\textnormal{f}} = \bar{\bm{\xi}}_{\textnormal{f}}$ and $\|\hat{\bm{\xi}}_{\textnormal{sos}}^{(j)}\|_2$ is bounded above by a constant, all the remaining terms in $\mathcal{L}(\hat{\bm{\xi}}, \bm{\lambda})$ are upper bounded by a constant independent of $\bm{\lambda}$. Completing the square in the free variable terms using $\bm{Q} \in \mathbb{S}^{n_{\textnormal{f}}}_{\succ 0}$ gives
    \begin{equation}
        \frac{1}{2}\bar{\bm{\xi}}_{\textnormal{f}}^\top \bm{Q}\bar{\bm{\xi}}_{\textnormal{f}} + \bm{w}_{\textnormal{f}}^\top\bar{\bm{\xi}}_{\textnormal{f}} 
        = \frac{1}{2}\|\bm{Q}^{1/2}\bar{\bm{\xi}}_{\textnormal{f}} + \bm{Q}^{-1/2}\bm{w}_{\textnormal{f}}\|_2^2 
        - \frac{1}{2}\|\bm{Q}^{-1/2}\bm{w}_{\textnormal{f}}\|_2^2 
        \leq \frac{1}{2}\|\bm{Q}^{1/2}\bar{\bm{\xi}}_{\textnormal{f}} + \bm{Q}^{-1/2}\bm{w}_{\textnormal{f}}\|_2^2,
        \nonumber
    \end{equation}
    and applying the norm bound to the regularization terms, all remaining terms are bounded above by the constant $\beta_2$ in \cref{eq:beta2}.
    
    Combining \cref{eq:constraint-term} with the bound on the remaining terms, it holds that
    \begin{equation}
        g(\bm{\lambda}) \leq \mathcal{L}(\hat{\bm{\xi}}, \bm{\lambda}) \leq \beta_2 - \beta_1\|\bm{\lambda}\|_2,
        \nonumber
    \end{equation}
    with $\beta_1 > 0$ and $\beta_2 > 0$. Hence, $g(\bm{\lambda}) \to -\infty$ as $\|\bm{\lambda}\|_2 \to \infty$,
    which is precisely the definition of coercivity for a concave function.
\end{proof}

Now, we prove that the dual iterates generated by \cref{alg:first-order-solver} remain within a bounded set. This property is crucial for ensuring that error bounds are well-defined. The following lemma provides this guarantee by leveraging the coercivity of the dual function established in \cref{thm:coercivity}.

\begin{lemma}[Boundedness of Dual Iterates]
    Let $\{\bm{\lambda}^{(k)}\}_{k=0}^{\infty}$ be the sequence generated by the accelerated gradient in \cref{alg:first-order-solver} applied to the concave function $g$ with $L$-Lipschitz gradient. Since $g$ is coercive by \cref{thm:coercivity}, it follows that the sequence $\{\bm{\lambda}^{(k)}\}$ is uniformly bounded: there exists a constant $B > 0$ such that
    \begin{equation}
        \Vert\bm{\lambda}^{(k)}\Vert_{2} \leq B, \quad \text{for all } k \geq 0.
        \label{eq:bounded-iterates}
    \end{equation}
    \label{lem:bounded-iterates}
\end{lemma}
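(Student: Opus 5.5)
The plan is to show that the accelerated scheme never leaves a fixed superlevel set of $g$, and then to bound that superlevel set using the coercivity estimate of \cref{thm:coercivity}.

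\textbf{Step 1: bounded superlevel sets.} By \cref{thm:coercivity},
\[
g(\bm{\lambda}) \ge \gamma \quad\Longrightarrow\quad \Vert\bm{\lambda}\Vert_{2} \le \frac{\beta_{2}-\gamma}{\beta_{1}}.
\]
Hence every superlevel set $\{\bm{\lambda}: g(\bm{\lambda})\ge\gamma\}$ is bounded. Since $g$ is continuous and concave (\cref{lem: dual concave and differentiable}), these sets are also compact and convex. In particular a maximizer $\bm{\lambda}^\star$ exists, as \cref{thm:gradient-rate} assumes.

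\textbf{Step 2: a uniform lower bound on $g(\bm{\lambda}^{(k)})$.} I would use \cref{thm:gradient-rate} directly. For every $k\ge1$,
\[
g(\bm{\lambda}^{(k)}) \ge g(\bm{\lambda}^\star) - \frac{2L\Vert\bm{\lambda}^{(0)}-\bm{\lambda}^\star\Vert_2^2}{(k+1)^2} \ge g(\bm{\lambda}^\star) - \frac{L}{2}\Vert\bm{\lambda}^{(0)}-\bm{\lambda}^\star\Vert_2^2 =: \gamma .
\]
The case $k=0$ is covered because $g(\bm{\lambda}^{(0)})\ge\gamma$: this follows from $L$-smoothness, $g(\bm{\lambda}^{(0)}) \ge g(\bm{\lambda}^\star) - \tfrac{L}{2}\Vert\bm{\lambda}^{(0)}-\bm{\lambda}^\star\Vert_2^2$, using $\nabla g(\bm{\lambda}^\star)=0$.

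\textbf{Step 3: conclude.} Combining Steps 1 and 2 gives
\[
B = \max\left\{ \frac{\beta_{2}-\gamma}{\beta_{1}},\, 1\right\},
\]
which is positive and independent of $k$. The same reasoning also bounds the extrapolation points $\bm{y}^{(k)}$. Each $\bm{y}^{(k)}$ is an affine combination of $\bm{\lambda}^{(k)}$ and $\bm{\lambda}^{(k-1)}$ with coefficient $(t^{(k-1)}-1)/t^{(k)}\in[0,1)$, so $\Vert\bm{y}^{(k)}\Vert_2\le 3B$.

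\textbf{Main obstacle: the restart step.} \cref{alg:first-order-solver} includes an adaptive restart, which sets $t^{(k)}\gets1$ and $\bm{\lambda}^{(k-1)}\gets\bm{\lambda}^{(k)}$. This invalidates the direct application of \cref{thm:gradient-rate} from the original initial point.

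My first approach is to treat each restart as a fresh run of the scheme, started at the current iterate $\bm{\lambda}^{(k_r)}$. Within each epoch, \cref{thm:gradient-rate} then gives
\[
g(\bm{\lambda}^{(k)}) \ge g(\bm{\lambda}^\star) - \frac{L}{2}\Vert\bm{\lambda}^{(k_r)}-\bm{\lambda}^\star\Vert_2^2 .
\]
To close the induction, I need $\Vert\bm{\lambda}^{(k_r)}-\bm{\lambda}^\star\Vert_2$ not to grow from one epoch to the next. For this I would use the standard estimate-sequence inequality from \cite{nesterov1983method}, which shows that the quantity $\Vert t^{(k)}\bm{\lambda}^{(k)} - (t^{(k)}-1)\bm{\lambda}^{(k-1)} - \bm{\lambda}^\star\Vert_2$ is nonincreasing. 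At a restart, $t=1$, so this quantity reduces to the distance $\Vert\bm{\lambda}^{(k_r)}-\bm{\lambda}^\star\Vert_2$. Consequently every restart point lies in the ball of radius $\Vert\bm{\lambda}^{(0)}-\bm{\lambda}^\star\Vert_2$ around $\bm{\lambda}^\star$, and the lower bound $\gamma$ from Step 2 holds for all $k$.

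If that monotonicity argument turns out to be delicate, the fallback is to use the restart test itself. The test fires only when $\nabla g(\bm{y}^{(k-1)})^{\top}(\bm{\lambda}^{(k)}-\bm{\lambda}^{(k-1)})<0$, and I would try to use it to show that $g$ does not decrease across a restart. In either case the level-set argument of Step 1 then finishes the proof.
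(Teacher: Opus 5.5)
Your Steps 1--3 are correct, and they reach the conclusion by a more quantitative route than the paper. The paper uses \cref{thm:gradient-rate} only to get $g(\bm{\lambda}^{(k)})\to g(\bm{\lambda}^{\star})$, hence $g(\bm{\lambda}^{(k)})\ge g(\bm{\lambda}^{\star})-1$ for $k\ge\hat{k}$. It then argues by contradiction: an unbounded sequence has a subsequence with $\|\bm{\lambda}^{(k_j)}\|_2\to\infty$, along which coercivity forces $g\to-\infty$. That shows $B$ exists but gives no value for it. You instead turn the explicit rate into a lower bound $\gamma$ valid for every $k$, treating $k=0$ by $L$-smoothness at the stationary point $\bm{\lambda}^{\star}$. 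You then invert the linear estimate $g\le\beta_2-\beta_1\|\bm{\lambda}\|_2$. As a result, $B$ is explicit in $\beta_1,\beta_2,L,g(\bm{\lambda}^{\star})$ and $\|\bm{\lambda}^{(0)}-\bm{\lambda}^{\star}\|_2$, which is what one would actually feed into \cref{thm:suboptimality-fixed-rho}. Both arguments rest on the same premise: that the rate of \cref{thm:gradient-rate} applies to the iterates of \cref{alg:first-order-solver}. The paper invokes it without any comment on the restart.

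You are right to flag the restart, but your patch does not work as written. In the paper's indexing, let $\bm{u}^{(k)}=t^{(k-1)}\bm{\lambda}^{(k)}-(t^{(k-1)}-1)\bm{\lambda}^{(k-1)}-\bm{\lambda}^{\star}$. The estimate-sequence argument does not make $\|\bm{u}^{(k)}\|_2$ nonincreasing. What is nonincreasing is $\tfrac{2}{L}(t^{(k-1)})^2\bigl(g(\bm{\lambda}^{\star})-g(\bm{\lambda}^{(k)})\bigr)+\|\bm{u}^{(k)}\|_2^2$, which gives only $\|\bm{u}^{(k)}\|_2\le\|\bm{\lambda}^{(0)}-\bm{\lambda}^{\star}\|_2$.

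More importantly, at a restart iteration the controlled quantity carries the pre-reset weight $t^{(k-1)}>1$. Setting $t\gets 1$ does not turn it into $\|\bm{\lambda}^{(k)}-\bm{\lambda}^{\star}\|_2$; it replaces it by a different quantity, whose size is exactly what must be proved. The missing step is the identity
\[
\bm{\lambda}^{(k)}-\bm{\lambda}^{\star}=\frac{1}{t^{(k-1)}}\,\bm{u}^{(k)}+\Bigl(1-\frac{1}{t^{(k-1)}}\Bigr)\bigl(\bm{\lambda}^{(k-1)}-\bm{\lambda}^{\star}\bigr),
\]
which is a convex combination since $t^{(k-1)}\ge 1$. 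By induction, every iterate of an epoch stays within the epoch's initial distance of $\bm{\lambda}^{\star}$. A restart keeps $\bm{\lambda}^{(k_r)}$ and only resets $t$ and the momentum, so the next epoch starts exactly as in the initialization of \cref{thm:gradient-rate}. Induction over epochs then gives $\|\bm{\lambda}^{(k)}-\bm{\lambda}^{\star}\|_2\le\|\bm{\lambda}^{(0)}-\bm{\lambda}^{\star}\|_2$ for all $k$. This already yields $B=\|\bm{\lambda}^{\star}\|_2+\|\bm{\lambda}^{(0)}-\bm{\lambda}^{\star}\|_2$, with coercivity needed only for the existence of $\bm{\lambda}^{\star}$. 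It also makes your $\gamma$ valid across restarts.

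Your fallback would not close the gap either. Even if $g$ does not decrease at restart instants, the next epoch's guarantee is stated in terms of $\|\bm{\lambda}^{(k_r)}-\bm{\lambda}^{\star}\|_2$. Converting a function-value bound into a distance bound through coercivity at each restart loosens the radius every time. So the resulting bound need not be uniform in $k$ if restarts occur infinitely often.
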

\begin{proof}
    According to \cref{thm:gradient-rate}, it follows that as $k \to \infty$
    \[
        \lim_{k \to \infty} g(\bm{\lambda}^{(k)}) = g(\bm{\lambda}^{\star}),
    \]
    where $\bm{\lambda}^{\star}$ is a global maximizer of $g$. Hence, there exists an integer $\hat{k}$ such that for all $k \geq \hat{k}$, it holds
    \[
        \vert g(\bm{\lambda}^{(k)}) - g(\bm{\lambda}^{\star}) \vert \leq 1.
    \]

    Assume, for contradiction, that $\{\bm{\lambda}^{(k)}\}$ is unbounded. Then there exists a subsequence $\{\bm{\lambda}^{(k_j)}\}_{j=1}^{\infty}$ such that
    \[
        \Vert\bm{\lambda}^{(k_j)}\Vert_{2} \to \infty \quad \text{as} \quad j \to \infty.
    \]
    
    Since $g$ is coercive, it follows that $g(\bm{\lambda}^{(k_j)}) \to -\infty$ and, for sufficiently large $j$, this yields
    \[
        g(\bm{\lambda}^{(k_j)}) <  g(\bm{\lambda}^{\star}) - 1.
    \]

    For all sufficiently large $j$, the condition $k_j \geq \hat{k}$ holds (since $k_j \to \infty$). Thus, $g(\bm{\lambda}^{(k_j)}) \geq  g(\bm{\lambda}^{\star}) - 1$, which contradicts the previous inequality. Therefore, the assumption of unboundedness is false and the sequence $\{\bm{\lambda}^{(k)}\}$ must be bounded.
\end{proof}

Now, we are ready to state the main convergence result of this paper. In this context, we define suboptimality by how close the objective value of \cref{opt:Reg-QCP} is from the optimum value of \cref{opt:QCP}. Mathematically, we analyze the difference between objective functions
\[
    f_{\rho}(\mathcal{S}_{\bm{\xi}}(\bm{\lambda}^{(N)})) - f(\bm{\xi}^{\star}),
\]
where $\bm{\xi}^{\star}$ is the optimal solution of the original problem of \cref{opt:QCP} and $\mathcal{S}_{\bm{\xi}}(\bm{\lambda}^{(N)})$ is the primal solution returned by \cref{alg:first-order-solver} after $N$ iterations for the regularized problem of \cref{opt:Reg-QCP}.

\begin{theorem}[non-asymptotic Bound for Suboptimality]
    Consider the original quadratic conic program of \cref{opt:QCP} with objective
    \[
        f(\bm{\xi}) = \frac{1}{2}\bm{\xi}_{\textnormal{f}}^{\top} \bm{Q} \bm{\xi}_{\textnormal{f}} + \bm{w}^{\top} \bm{\xi},
    \]
    and its regularized version of \cref{opt:Reg-QCP} with objective
    \[
        f_{\rho}(\bm{\xi}) = \frac{1}{2}\bm{\xi}_{\textnormal{f}}^{\top}\bm{Q}\bm{\xi}_{\textnormal{f}} + \bm{w}^{\top} \bm{\xi} + \frac{\rho}{2} \sum_{j=1}^{m_{\textnormal{sos}}} \Vert \bm{\xi}_{\textnormal{sos}}^{(j)} \Vert_{\textnormal{2}}^{2}.    
    \]
    Let $\bm{Q} \in \mathbb{S}_{\succ 0}^{n_{\textnormal{f}}}$ and the feasible set of \cref{opt:Reg-QCP} be nonempty with Slater point $\bar{\bm{\xi}} \in \operatorname*{int}{K}$ satisfying $\bm{A} \bar{\bm{\xi}} = \bm{b}$.
    Let $\mathcal{S}_{\bm{\xi}}(\bm{\lambda}^{(N)})$ and $\bm{\lambda}^{(N)}$ denote the primal and dual iterates, respectively, returned by \cref{alg:first-order-solver} after $N$ gradient steps with $\eta = 1/L$, where $L$ is the Lipschitz constant in \cref{eq:lipschitz-constant}. Let $\bm{\xi}^{\star}$ be an optimal solution of \cref{opt:QCP} and $\mathcal{S}_{\bm{\xi}}(\bm{\lambda}^{\star})$ an optimal solution of \cref{opt:Reg-QCP}.
    
    Then, for all $N \geq 0$,
    \begin{equation}
        f(\mathcal{S}_{\bm{\xi}}(\bm{\lambda}^{(N)})) - f(\bm{\xi}^{\star}) \leq \frac{2 B L \|\bm{\lambda}^{(0)} - \bm{\lambda}^\star\|_2}{(N + 1)} + \rho \sum_{j=1}^{m_{\textnormal{sos}}} \Vert {\bm{\xi}_{\textnormal{sos}}^{(j)}}^{\star} \Vert_{\textnormal{2}}^{2}.
    \end{equation}
    \label{thm:suboptimality-fixed-rho}
\end{theorem}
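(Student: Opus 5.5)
Let $\bm{\xi}^{(N)} := \mathcal{S}_{\bm{\xi}}(\bm{\lambda}^{(N)})$ be the Lagrangian minimizer at $\bm{\lambda}^{(N)}$. The plan is a telescoping chain $f(\bm{\xi}^{(N)}) \le f_{\rho}(\bm{\xi}^{(N)})$, then $f_{\rho}(\bm{\xi}^{(N)})$ versus the regularized optimum, then the regularized optimum versus $f(\bm{\xi}^{\star})$. The first inequality is immediate because the regularization term is nonnegative.

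For the middle step I will use that $\bm{\xi}^{(N)}$ attains the minimum in the definition of $g$, so
\[
g(\bm{\lambda}^{(N)}) = \mathcal{L}(\bm{\xi}^{(N)},\bm{\lambda}^{(N)}) = f_{\rho}(\bm{\xi}^{(N)}) - {\bm{\lambda}^{(N)}}^{\top}(\bm{A}\bm{\xi}^{(N)} - \bm{b}).
\]
By \cref{eq:gradient-dual-feasibility}, $\bm{A}\bm{\xi}^{(N)} - \bm{b} = -\nabla g(\bm{\lambda}^{(N)})$. Hence $f_{\rho}(\bm{\xi}^{(N)}) = g(\bm{\lambda}^{(N)}) - {\bm{\lambda}^{(N)}}^{\top}\nabla g(\bm{\lambda}^{(N)})$. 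Weak duality gives $g(\bm{\lambda}^{(N)}) \le g(\bm{\lambda}^{\star}) \le p_{\rho}^{\star}$, where $p_{\rho}^{\star}$ denotes the optimal value of \cref{opt:Reg-QCP}. The cross term is bounded by Cauchy--Schwarz, \cref{lem:bounded-iterates} ($\|\bm{\lambda}^{(N)}\|_2\le B$) and \cref{eq:convergence-feas}:
\[
|{\bm{\lambda}^{(N)}}^{\top}\nabla g(\bm{\lambda}^{(N)})| \le B\cdot \frac{2L\|\bm{\lambda}^{(0)}-\bm{\lambda}^\star\|_2}{N+1}.
\]
Together these give $f_{\rho}(\bm{\xi}^{(N)}) \le p_{\rho}^{\star} + \frac{2BL\|\bm{\lambda}^{(0)}-\bm{\lambda}^\star\|_2}{N+1}$. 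The Slater point ensures that a dual maximizer $\bm{\lambda}^\star$ exists; coercivity from \cref{thm:coercivity} is what delivers this.

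For the last step, $\bm{\xi}^{\star}$ is feasible for \cref{opt:Reg-QCP}, so
\[
p_{\rho}^{\star} \le f_{\rho}(\bm{\xi}^{\star}) = f(\bm{\xi}^{\star}) + \frac{\rho}{2}\sum_j \|{\bm{\xi}_{\textnormal{sos}}^{(j)}}^{\star}\|_2^2 \le f(\bm{\xi}^{\star}) + \rho\sum_j \|{\bm{\xi}_{\textnormal{sos}}^{(j)}}^{\star}\|_2^2.
\]
Chaining the three steps yields the claimed bound, even with the slack factor $\tfrac12$ on the regularization term.

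The main obstacle is the middle step's reliance on the bounds for $\bm{\lambda}^{(N)}$ itself. \cref{thm:gradient-rate} and \cref{eq:convergence-feas} are stated for $\bm{\lambda}^{(N)}$, whereas \cref{alg:first-order-solver} recovers the primal point at the extrapolation point $\bm{y}^{(N)}$ and includes restarts. I would take the returned dual iterate to be $\bm{\lambda}^{(N)}$, matching the theorem statement, and apply \cref{eq:convergence-feas} and \cref{lem:bounded-iterates} directly. If the algorithm's output is meant instead, boundedness of $\bm{y}^{(N)}$ follows from boundedness of the $\bm{\lambda}$-sequence, since the momentum coefficient lies in $[0,1)$. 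The gradient bound at $\bm{y}^{(N)}$ would then need a separate argument, which I would only remark on.
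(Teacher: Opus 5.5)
Your proposal is correct and follows essentially the same route as the paper: the identity $f_{\rho}(\mathcal{S}_{\bm{\xi}}(\bm{\lambda}^{(N)})) = g(\bm{\lambda}^{(N)}) - {\bm{\lambda}^{(N)}}^{\top}\nabla g(\bm{\lambda}^{(N)})$, a comparison of $g(\bm{\lambda}^{(N)})$ with the regularized optimum, Cauchy--Schwarz combined with \cref{lem:bounded-iterates} and \cref{eq:convergence-feas}, and feasibility of $\bm{\xi}^{\star}$ for \cref{opt:Reg-QCP}. The differences are minor: you use weak duality where the paper invokes strong duality, and you avoid the paper's double-counting of the regularization term in Step 3, so you obtain the sharper constant $\rho/2$. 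Like the paper, you also rely on \cref{thm:coercivity}, including its invertibility assumption on $\bm{Z}^{(j)}$, which is not among this theorem's hypotheses, and on reading the returned iterate as $\bm{\lambda}^{(N)}$ rather than the restarted extrapolation point $\bm{y}^{(N)}$.
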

\begin{proof}
    The proof proceeds in three steps: relate the regularized objective at the recovered primal $\mathcal{S}_{\bm{\xi}}(\bm{\lambda}^{(N)})$ to the dual function value $g(\bm{\lambda}^{(N)})$, bound the regularized suboptimality $f_{\rho}(\mathcal{S}_{\bm{\xi}}(\bm{\lambda}^{(N)})) - f_{\rho}(\mathcal{S}_{\bm{\xi}}(\bm{\lambda}^{\star}))$ using the convergence of the dual problem, and transfer the bound to the original objective $f$ via the regularization error.

    \textbf{Step 1.} From the definitions of $\mathcal{S}_{\bm{\xi}_{\textnormal{f}}}(\bm{\lambda})$ and $\mathcal{S}_{\bm{\xi}_{\textnormal{sos}}^{(j)}}(\bm{\lambda})$ in \cref{eq:free-sol} and \cref{eq:sos-sol}, respectively, the returned primal $\mathcal{S}_{\bm{\xi}}(\bm{\lambda}^{(N)})$ minimizes the partial Lagrangian $\mathcal{L}(\bm{\xi}, \bm{\lambda}^{(N)})$ over $\bm{\xi} \in K$. Hence, evaluating the Lagrangian at $\mathcal{S}_{\bm{\xi}}(\bm{\lambda}^{(N)})$ gives
    \begin{equation}
        \mathcal{L}(\mathcal{S}_{\bm{\xi}}(\bm{\lambda}^{(N)}), \bm{\lambda}^{(N)}) = f_{\rho}(\mathcal{S}_{\bm{\xi}}(\bm{\lambda}^{(N)})) - {\bm{\lambda}^{(N)}}^{\top}( \bm{A} \mathcal{S}_{\bm{\xi}}(\bm{\lambda}^{(N)}) - \bm{b} ) = g(\bm{\lambda}^{(N)}).
        \label{eq:convergence-aux-1}
    \end{equation}

    The gradient of the dual function, derived in \cref{eq:gradient-dual} and expressed in \cref{eq:gradient-dual-feasibility}, satisfies
    \begin{equation}
        \nabla g(\bm{\lambda}^{(N)}) = - ( \bm{A} \mathcal{S}_{\bm{\xi}}(\bm{\lambda}^{(N)}) - \bm{b} ).
        \label{eq:convergence-aux-2}
    \end{equation}

    Combining \cref{eq:convergence-aux-1} and \cref{eq:convergence-aux-2} yields
    \begin{equation}
        f_{\rho}(\mathcal{S}_{\bm{\xi}}(\bm{\lambda}^{(N)})) = g(\bm{\lambda}^{(N)}) - {\bm{\lambda}^{(N)}}^{\top} \nabla g(\bm{\lambda}^{(N)}),
        \label{eq:convergence-aux-3}
    \end{equation}
    which concludes the first step of the proof.

    \textbf{Step 2.} Now, because \cref{opt:Reg-QCP} is a convex optimization problem satisfying Slater's condition, strong duality holds. Let $\bm{\lambda}^{\star}$ be a dual optimal solution, which exists and is finite due to coercivity established in \cref{thm:coercivity}. Then,
    \begin{equation}
        f_{\rho}(\mathcal{S}_{\bm{\xi}}(\bm{\lambda}^{\star})) = g(\bm{\lambda}^{\star})
        \label{eq:convergence-aux-4}
    \end{equation}

    Using \cref{eq:convergence-aux-3} and \cref{eq:convergence-aux-4}, the suboptimality of the regularized problem is decomposed as
    \begin{equation}
        f_{\rho}(\mathcal{S}_{\bm{\xi}}(\bm{\lambda}^{(N)})) - f_{\rho}(\mathcal{S}_{\bm{\xi}}(\bm{\lambda}^{\star})) = ( g(\bm{\lambda}^{(N)}) - g(\bm{\lambda}^{\star}) ) - {\bm{\lambda}^{(N)}}^{\top} \nabla g(\bm{\lambda}^{(N)})
        \label{eq:convergence-aux-5}
    \end{equation}

    \cref{thm:gradient-rate} provides the accelerated dual convergence bound
    \begin{equation}
        g(\bm{\lambda}^\star) - g(\bm{\lambda}^{(N)}) \leq \frac{2L \|\bm{\lambda}^{(0)} - \bm{\lambda}^\star\|_2^2}{(N+1)^2}.
        \nonumber
    \end{equation}

    Since $g$ is concave (see \cref{lem: dual concave and differentiable}) and $\bm{\lambda}^\star$ is a maximizer, it follows that
    \begin{equation}
        g(\bm{\lambda}^{(N)}) - g(\bm{\lambda}^\star) \leq 0 \leq g(\bm{\lambda}^\star) - g(\bm{\lambda}^{(N)}) \leq \frac{2L \|\bm{\lambda}^{(0)} - \bm{\lambda}^\star\|_2^2}{(N+1)^2}.
        \label{eq:convergence-aux-6}
    \end{equation}
    Therefore, by substituting \cref{eq:convergence-aux-6} into \cref{eq:convergence-aux-5}, one has
    \begin{equation}
        f_{\rho}(\mathcal{S}_{\bm{\xi}}(\bm{\lambda}^{(N)})) - f_{\rho}(\mathcal{S}_{\bm{\xi}}(\bm{\lambda}^{\star})) \leq  - {\bm{\lambda}^{(N)}}^{\top} \nabla g(\bm{\lambda}^{(N)}),
        \nonumber
    \end{equation}
    and, by using the Cauchy–Schwarz inequality to bound the last term of the right-hand side, yields
    \begin{equation}
        f_{\rho}(\mathcal{S}_{\bm{\xi}}(\bm{\lambda}^{(N)})) - f_{\rho}(\mathcal{S}_{\bm{\xi}}(\bm{\lambda}^{\star})) \leq  \| \bm{\lambda}^{(N)} \|_2 \| \nabla g(\bm{\lambda}^{(N)}) \|_2,
        \label{eq:convergence-aux-7}
    \end{equation}
    which concludes the second step of the proof.

    \textbf{Step 3.} For any $\bm{\xi} \in K$,
    \begin{equation}
        f_{\rho}(\bm{\xi}) = f(\bm{\xi}) + \frac{\rho}{2} \sum_{j=1}^{m_{\textnormal{sos}}} \Vert \bm{\xi}_{\textnormal{sos}}^{(j)} \Vert_{\textnormal{2}}^{2}.
        \nonumber
    \end{equation}
    Applying this to $\mathcal{S}_{\bm{\xi}}(\bm{\lambda}^{(N)})$ and to an optimal solution $\bm{\xi}^{\star}$ of the original problem of \cref{opt:QCP} yields
    \begin{equation}
        f(\mathcal{S}_{\bm{\xi}}(\bm{\lambda}^{(N)})) - f(\bm{\xi}^{\star}) = f_{\rho}(\mathcal{S}_{\bm{\xi}}(\bm{\lambda}^{(N)})) - f_{\rho}(\bm{\xi}^{\star}) - \frac{\rho}{2} \sum_{j=1}^{m_{\textnormal{sos}}} \Vert {\bm{\xi}_{\textnormal{sos}}^{(j)}}^{(N)} \Vert_{\textnormal{2}}^{2} + \frac{\rho}{2} \sum_{j=1}^{m_{\textnormal{sos}}} \Vert {\bm{\xi}_{\textnormal{sos}}^{(j)}}^{\star} \Vert_{\textnormal{2}}^{2}.
        \nonumber
    \end{equation}
    Because the penalty term is nonnegative, one has
    \begin{equation}
        f(\mathcal{S}_{\bm{\xi}}(\bm{\lambda}^{(N)})) - f(\bm{\xi}^{\star}) \leq f_{\rho}(\mathcal{S}_{\bm{\xi}}(\bm{\lambda}^{(N)})) - f_{\rho}(\bm{\xi}^{\star}) + \frac{\rho}{2} \sum_{j=1}^{m_{\textnormal{sos}}} \Vert {\bm{\xi}_{\textnormal{sos}}^{(j)}}^{\star} \Vert_{\textnormal{2}}^{2}.
        \label{eq:convergence-aux-8}
    \end{equation}
    Now, note that $\bm{\xi}^{\star}$ is feasible for \cref{opt:Reg-QCP}, so the optimality of $\mathcal{S}_{\bm{\xi}}(\bm{\lambda}^{\star})$ for the regularized problem implies
    \begin{equation}
        f_{\rho}(\mathcal{S}_{\bm{\xi}}(\bm{\lambda}^{\star})) \leq f_{\rho}(\bm{\xi}^{\star}) = f(\bm{\xi}^{\star}) + \frac{\rho}{2} \sum_{j=1}^{m_{\textnormal{sos}}} \Vert {\bm{\xi}_{\textnormal{sos}}^{(j)}}^{\star} \Vert_{\textnormal{2}}^{2}.
        \nonumber
    \end{equation}
    Thus
    \begin{equation}
        f_{\rho}(\mathcal{S}_{\bm{\xi}}(\bm{\lambda}^{(N)})) - f_{\rho}(\bm{\xi}^{\star}) \leq f_{\rho}(\mathcal{S}_{\bm{\xi}}(\bm{\lambda}^{(N)})) - f_{\rho}(\mathcal{S}_{\bm{\xi}}(\bm{\lambda}^{\star})) + \frac{\rho}{2} \sum_{j=1}^{m_{\textnormal{sos}}} \Vert {\bm{\xi}_{\textnormal{sos}}^{(j)}}^{\star} \Vert_{\textnormal{2}}^{2},
        \label{eq:convergence-aux-9}
    \end{equation}
    and by combining \cref{eq:convergence-aux-8} and \cref{eq:convergence-aux-9}, one has
    \begin{equation}
        f(\mathcal{S}_{\bm{\xi}}(\bm{\lambda}^{(N)})) - f(\bm{\xi}^{\star}) \leq f_{\rho}(\mathcal{S}_{\bm{\xi}}(\bm{\lambda}^{(N)})) - f_{\rho}(\mathcal{S}_{\bm{\xi}}(\bm{\lambda}^{\star})) + \rho \sum_{j=1}^{m_{\textnormal{sos}}} \Vert {\bm{\xi}_{\textnormal{sos}}^{(j)}}^{\star} \Vert_{\textnormal{2}}^{2}.
        \nonumber
    \end{equation}
    Applying the upper bound in \cref{eq:convergence-aux-7}, it holds
    \begin{equation}
        f(\mathcal{S}_{\bm{\xi}}(\bm{\lambda}^{(N)})) - f(\bm{\xi}^{\star}) \leq \| \bm{\lambda}^{(N)} \|_2 \| \nabla g(\bm{\lambda}^{(N)}) \|_2 + \rho \sum_{j=1}^{m_{\textnormal{sos}}} \Vert {\bm{\xi}_{\textnormal{sos}}^{(j)}}^{\star} \Vert_{\textnormal{2}}^{2}.
        \nonumber
    \end{equation}
    Using \cref{lem:bounded-iterates} to bound the dual iterate and the bound in \cref{eq:convergence-feas} from the proof of \cref{thm:feasibility} gives
    \begin{equation}
        f(\mathcal{S}_{\bm{\xi}}(\bm{\lambda}^{(N)})) - f(\bm{\xi}^{\star}) \leq  \frac{2 B L \|\bm{\lambda}^{(0)} - \bm{\lambda}^\star\|_2}{(N + 1)} + \rho \sum_{j=1}^{m_{\textnormal{sos}}} \Vert {\bm{\xi}_{\textnormal{sos}}^{(j)}}^{\star} \Vert_{\textnormal{2}}^{2},
    \end{equation}
    concluding the last step of the proof.
\end{proof}

The suboptimality bound in \cref{thm:suboptimality-fixed-rho} demonstrates that the distance between the regularized primal solution of \cref{opt:Reg-QCP} and the solution of the original \cref{opt:QCP} decreases as the number of iterations $N$ grows, indicating accelerated convergence of the dual method. However, the bound also contains terms proportional to $\rho$, implying that the regularized problem must approach the unregularized one for the approximation to be exact. Intuitively, as $\rho \to 0$, the objective of \cref{opt:Reg-QCP} is equal with that of \cref{opt:QCP}, but the Lipschitz constant of the dual gradient grows as $\mathcal{O}(1/\rho)$, making the dual problem increasingly slow to solve: to achieve very small error, $N$ must increase much faster relative to $1/\rho$, which guarantees that the accelerated method compensates for the larger Lipschitz constant or, equivalently, for the smaller stepsize. Such a relationship indicates that vanishing regularization increases accuracy compared to the original problem while requiring additional dual iterations to maintain convergence guarantees.

\textbf{Termination Criteria.} \cref{alg:first-order-solver} terminates when both the primal feasibility residual and the duality gap fall below absolute and relative tolerances. At iteration $k$, primal feasibility is measured and termination requires
\begin{equation}
    r_{\textnormal{prim}}^{(k)} < \epsilon_{\textnormal{abs}} + \epsilon_{\textnormal{rel}} \max\{ \Vert \bm{y}^{(k)} \Vert_{\infty}, 1 \},
    \nonumber
\end{equation}
where $r_{\textnormal{prim}}^{(k)} = \Vert \nabla g (\bm{y}^{(k)}) \Vert_{\infty}$, $\epsilon_{\textnormal{abs}} > 0$, and $\epsilon_{\textnormal{rel}} > 0$ are the primal residual and user supplied tolerances, respectively.

Although the primal feasibility residual already guarantees that the recovered point is nearly feasible, one may also wish to ensure near-optimality. The duality gap for the regularized problem of \cref{opt:Reg-QCP} is given by \cref{eq:convergence-aux-1},
\begin{equation}
    \vert f_{\rho}(\mathcal{S}_{\bm{\xi}}(\bm{\lambda})) - g(\bm{\lambda}) \vert = \vert \bm{\lambda}^{\top}( \bm{A} \mathcal{S}_{\bm{\xi}}(\bm{\lambda}) - \bm{b} ) \vert = \vert \bm{\lambda}^{\top}  \nabla g(\bm{\lambda}) \vert,
    \nonumber
\end{equation}
and the algorithm stops when
\begin{equation}
     r_{\textnormal{gap}}^{(k)} < \epsilon_{\textnormal{abs}} + \epsilon_{\textnormal{rel}} \max \{ \vert f_{\rho}(\bm{y}^{(k)}) \vert, \vert g(\bm{y}^{(k)}) \vert \},
    \nonumber
\end{equation}
which reflects a relative duality gap, where $r_{\textnormal{gap}}^{(k)} = \vert {\bm{y}^{(k)}}^{\top} \nabla g(\bm{y}^{(k)}) \vert$. In general, it is noted that checking only the primal feasibility residual is often sufficient, since the convergence of the dual objective (\cref{thm:gradient-rate}) and the boundedness of the dual variables (\cref{lem:bounded-iterates}) imply that the gap also becomes small.

These criteria mirror those used in modern conic solvers such as COSMO, SCS, and Clarabel, where termination is based on feasibility of affine constraints and a certificate of near-optimality. Because primal variables are recovered in closed form from the dual iterate, feasibility and optimality can be monitored directly through the gradient of the dual function. Adaptive restart ensures stability of the accelerated scheme, and the algorithm stops either when tolerances are met or when the maximum number of iterations is reached.

\textbf{Adaptive Restart.} The accelerated scheme uses an adaptive restart mechanism to maintain stability and prevent oscillations typical of Nesterov’s method. At iteration 
$k$, restart is triggered when the gradient direction becomes misaligned with the extrapolated step, i.e., when ${\nabla g(\bm{y}^{(k-1)})}^{\top} ( \bm{\lambda}^{(k)} - \bm{\lambda}^{(k-1)} ) < 0$. In this case, the momentum parameter is reset to $t^{(k)} = 1$, and the extrapolated point is replaced by the current dual iterate, $\bm{y}^{(k)} = \bm{\lambda}^{(k)}$, warm-starting the accelerated gradient. This heuristic \cite{o2015adaptive} helps to mitigate oscillatory behavior and can lead to faster practical convergence, especially when the objective is not strongly convex or when the Lipschitz constant is over-estimated. In practice, it often reduces the number of iterations required to reach a given tolerance, as the algorithm adaptively resets when progress stalls.

\section{Numerical Experiments}
\label{sec:experiments}

In this section, benchmark examples are presented to show how can one employ \cref{alg:first-order-solver} to solve the problems in the form of \cref{opt:QSOS}. At the current state, \cref{alg:first-order-solver} is implemented in MATLAB \cite{matlab} due to being fast to develop and debug. All programs are parsed into an optimization problem in the form of \cref{opt:QCP}. Specifically, the quadratic objective is parsed manually, while the constraints are parsed using SOSTOOLS \cite{sostools4}. Then, to compare different solving approaches, the original problem of \cref{opt:QCP} is lifted into Opt. \cref{opt:standard-QCP} (which is solved by SCS \cite{o2021operator}), or lifted into \cref{opt:socp} and \cref{opt:schur} which are solved by MOSEK \cite{mosek}. Only \cref{alg:first-order-solver} solves the problem of \cref{opt:QCP} without lifting the decision variables or the constraints space.

The comparison between \cref{alg:first-order-solver} and IPMs focuses primarily on scalability: IPMs are known to suffer from high memory and computational costs as problem dimensions grow, whereas the proposed method maintains a much smaller memory usage. In contrast, the comparison with SCS (another first-order conic solver) centers on computational efficiency: since scalability is less of a differentiator, they are compared in terms of solution time and achievable accuracy. In SCS and \cref{alg:first-order-solver}, $\epsilon_{\textnormal{abs}} = 10^{-12}$ and $\epsilon_{\textnormal{rel}} = 10^{-12}$ are set for full accuracy of solutions and a maximum of $10^{5}$ iterations is defined. In MOSEK, default termination criteria parameters are used. The maximum solve time for all solvers is set to 600 seconds (10 minutes) and if a solver fail (due to numerical problems or memory exhaustion) the maximum allowable solve time is set. Next, performance metrics are defined to compare the proposed approach against other solvers.

\textbf{Performance Metrics.} In this section, the standard benchmark convention is followed using performance profiles to compare solve time and the ratio of problems solved \cite{dolan2002benchmarking}. The first metric uses the relative performance ratio, which is defined as the time $t$ to solve a problem $p$ using a solver $s$ divided by the fastest solve time across all solvers, $u_{p,s} = t_{p,s} / \min_{s} t_{p,s}$. For the ratio of $N$ problems solved relative to the fastest solve time, define $f: \mathbb{R}_{\geq 0} \to [0,1]$ such that $f_{s}(\tau) = 1/N \sum_{p}\mathcal{I}_{\leq \tau}(u_{p,s})$, where
\begin{equation}
    \mathcal{I}_{\leq \tau}(u_{p,s}) =
    \begin{cases}
      1 & \text{if $t_{p,s} \leq \tau$},\\
      0 & \text{otherwise}.
    \end{cases} 
    \nonumber
\end{equation}

This metric measures the ratio of problems solved by solver $s$ within the time $\tau$. By varying $\tau$ as a multiple of the best solve time, a relative performance profile is constructed using $f_{s}(\tau)$.

The second metric uses the shifted geometric mean of solve times defined by
\begin{equation}
    \text{SGM}_{s} = {\left( \prod_{i=1}^{N} (t_{i,s} + 1) \right)}^{1/N} - 1,
    \nonumber
\end{equation}
and consists of the normalized shifted geometric mean, $\text{NSGM}_{s} = \text{SGM}_{s} / \min_{s} \text{SGM}_{s}$.

This metric measures the typical relative performance of a solver against the best on each problem. It provides a single-number summary of how close a solver is to the best ($\text{NSGM}_{s} = 1$), but it is worth noting that performance profiles remain the standard point of reference to evaluate performance.

\textbf{Constrained Regression Problems.} Consider the following regression problem
\begin{equation}
    \begin{aligned}
        \min_{p}~& \sum_{i=1}^{D}\left(p(\bm{x}^{(i)}) - y^{(i)}\right)^{2} \quad \text{s.t.}~& p(\bm{x})~\textnormal{is sum of squares},\\
    \end{aligned}
    \label{opt:ex1}
\end{equation}
where the solution is a sum-of-squares polynomial $p^{\star}(\bm{x})$ with coefficients ${\bm{\xi}_{\textnormal{f}}}^{\star} \in \mathbb{R}^{n_{\textnormal{f}}}$ that fits the $D \in \mathbb{N}$ data points. For any polynomial $p(\bm{x}) = \bm{z}_{2d}(\bm{x})^{\top} \bm{\xi}_{\textnormal{f}}$ (of the same degree of $p^{\star}(\bm{x})$), the objective becomes $\sum_{i=1}^{D}\left(p(\bm{x}^{(i)}) - y^{(i)}\right)^{2} = \frac{1}{2}\bm{\xi}_{\textnormal{f}}^{\top}\bm{Q}\bm{\xi}_{\textnormal{f}} + \bm{w}_{\textnormal{f}}^{\top}\bm{\xi}_{\textnormal{f}} + \text{constant}$, where $\bm{Q} = 2\sum_{i=1}^{D}\bm{z}_{2d}(\bm{x}^{(i)})\bm{z}_{2d}(\bm{x}^{(i)})^{\top}$ and $\bm{w}_{\textnormal{f}} = -2\sum_{i=1}^{D}y^{(i)}\bm{z}_{2d}(\bm{x}^{(i)})$. The constant term is omitted as it does not affect minimization. Hence, this problem is computationally equivalent to the form
\begin{equation}
    \begin{split}
        \min_{\bm{\xi}_{\textnormal{f}}, \bm{\Xi}^{(1)}} \quad &\frac{1}{2}\bm{\xi}_{\textnormal{f}}^{\top} \bm{Q} \bm{\xi}_{\textnormal{f}} + \bm{w}_{\textnormal{f}}^{\top} \bm{\xi}_{\textnormal{f}}\\
        \text{s.t.} \quad &\bm{z}_{2d}(\bm{x})^{\top} \bm{\xi}_{\textnormal{f}} = \bm{z}_{d}(\bm{x})^{\top} \bm{\Xi}^{(1)} \bm{z}_{d}(\bm{x}),\\
        & \bm{\Xi}^{(1)} \in \mathbb{S}_{+}^{(n_x + d)!/(n_x!~d!)}.\\
    \end{split}
    \nonumber
\end{equation}

By following the procedure presented in Section \ref{sec:lifting}, one can reformulate this problem into the form of \cref{opt:QCP}. To construct sets of benchmark problems, randomly generated sum-of-squares polynomials of different degrees $2d$ in different dimension sizes $n_{x}$ define the problem size. The set of data points $\{\bm{x}^{(i)}, y^{(i)}\}_{i = 1}^{D}$ is defined by randomly sampling $\bm{x}^{(i)} \in [-1,1]^{n_{x}}$ and generating $y^{(i)} = p^{\star}(\bm{x}^{(i)})$ by the ground-truth solution, whose coefficients are ${\bm{\xi}_{\textnormal{f}}}^{\star} \in [-1,1]^{n_{\textnormal{f}}}$. The scaling of $\bm{x}^{(i)}$, ${\bm{\xi}_{\textnormal{f}}}^{\star}$, and the number $D = 2 n_{\textnormal{f}}$ of data points are defined to ensure $\bm{Q} \in \mathbb{S}_{\succ 0}^{n_{\textnormal{f}}}$ and to avoid ill conditioning as much as possible.

In this numerical comparison, benchmark problems where $1000 \leq n_{\textnormal{f}} \leq 20000$ are considered, with this range defining the problems' size. For the set of problems, $10$ different solution polynomials are defined for $24$ different combinations of $(n_{x}, 2d)$, resulting in $240$ problems in total.

The numerical results are summarized in \cref{tab:ex1} and \cref{fig:ex1}. \cref{tab:ex1} reports the normalized shifted geometric mean (NSGM) of solve times, the failure rate, and the averages of suboptimality and primal feasibility (computed just for reference and only over successfully solved instances). \cref{alg:first-order-solver}, with regularization parameter $\rho = 10^{-6}$, solves all $240$ problems, achieving a zero failure rate and the best NSGM of 1. SCS has an NSGM of $1.4013$, about $40\%$ slower than \cref{alg:first-order-solver} on average, but fails on $8.33\%$ of the problems. Both MOSEK reformulations (SOCP and SDP) exhibit high failure rates ($71.67\%$ and $75\%$, respectively) and infinite NSGM due to numerous timeouts and memory errors. In terms of solution quality, \cref{alg:first-order-solver} and SCS attain comparable average suboptimality (averaged $4.25 \times 10^{-8}$) and primal feasibility (averaged $1.33 \times 10^{-13}$) on the problems they both solve. MOSEK, when it succeeds, achieves lower accuracy (suboptimality as high as $0.24$ for \cref{opt:socp} and $\approx 5 \times 10^{-6}$ for \cref{opt:schur}) and has  longer runtimes, showing poor reliability.

\begin{table}[htb]
\centering
\resizebox{\textwidth}{!}{%
\begin{tabular}{l | c | c | c | c}
\toprule
Method & Alg.~\cref{alg:first-order-solver}, \cref{opt:QCP} &
SCS, Opt.~\cref{opt:standard-QCP} &
MOSEK, \cref{opt:socp} &
MOSEK, \cref{opt:schur} \\
\midrule
NSGM & $1$ & $1.4013$ & $\infty$ & $\infty$\\
\midrule
Failure rate (\%) & $0$ & $8.3333$ & $71.6667$ & $75$\\
\midrule
Avg.\ suboptimality &
$5.1499 \times 10^{-8}$ &
$3.3482 \times 10^{-8}$ &
$0.2473$ &
$4.9624 \times 10^{-6}$\\
\midrule
Avg.\ primal feasibility &
$2.5635 \times 10^{-13}$ &
$1.0619 \times 10^{-14}$ &
$2.0277 \times 10^{-7}$ &
$2.5030 \times 10^{-10}$\\
\bottomrule
\end{tabular}
}
\caption{Performance comparison on 240 constrained regression problems. Reported are the normalized shifted geometric mean (NSGM) of solve times (lower is better, $1$ indicates the fastest on average), failure rate (percentage of problems not solved within the $10$-minute time limit or due to memory exhaustion), and averages of suboptimality and primal feasibility (computed only over successfully solved instances).}
\label{tab:ex1}
\end{table}

\cref{fig:ex1} presents the performance profile. The curve for \cref{alg:first-order-solver} rises quickly, reaching about $80\%$ of problems solved at a relative performance ratio $\tau > 50$, meaning that $80\%$ of the problems are solved in at most $50$ times the fastest solve time, which corresponds to approximately $15$ seconds. For $100 < \tau < 250$, about $90\%$ of the problems are solved faster with \cref{alg:first-order-solver} than with any of the other solvers. SCS's curve flattens at $91.67\%$ because it fails on the remaining $8.33\%$, whereas \cref{alg:first-order-solver} can solve all problems within the $10$-minute time limit. Both MOSEK variants climb slowly and do not exceed $30\%$ even at large $\tau$, confirming their poor scalability. Overall, the results demonstrate that \cref{alg:first-order-solver} is a robust, memory-efficient, and practically accurate solver for large-scale quadratic SOS programs, outperforming MOSEK in reliability and SCS in the fraction of problems solved, presenting competitive speed and accuracy.

\begin{figure}[htb]
    \centering
    \includegraphics[width=\textwidth]{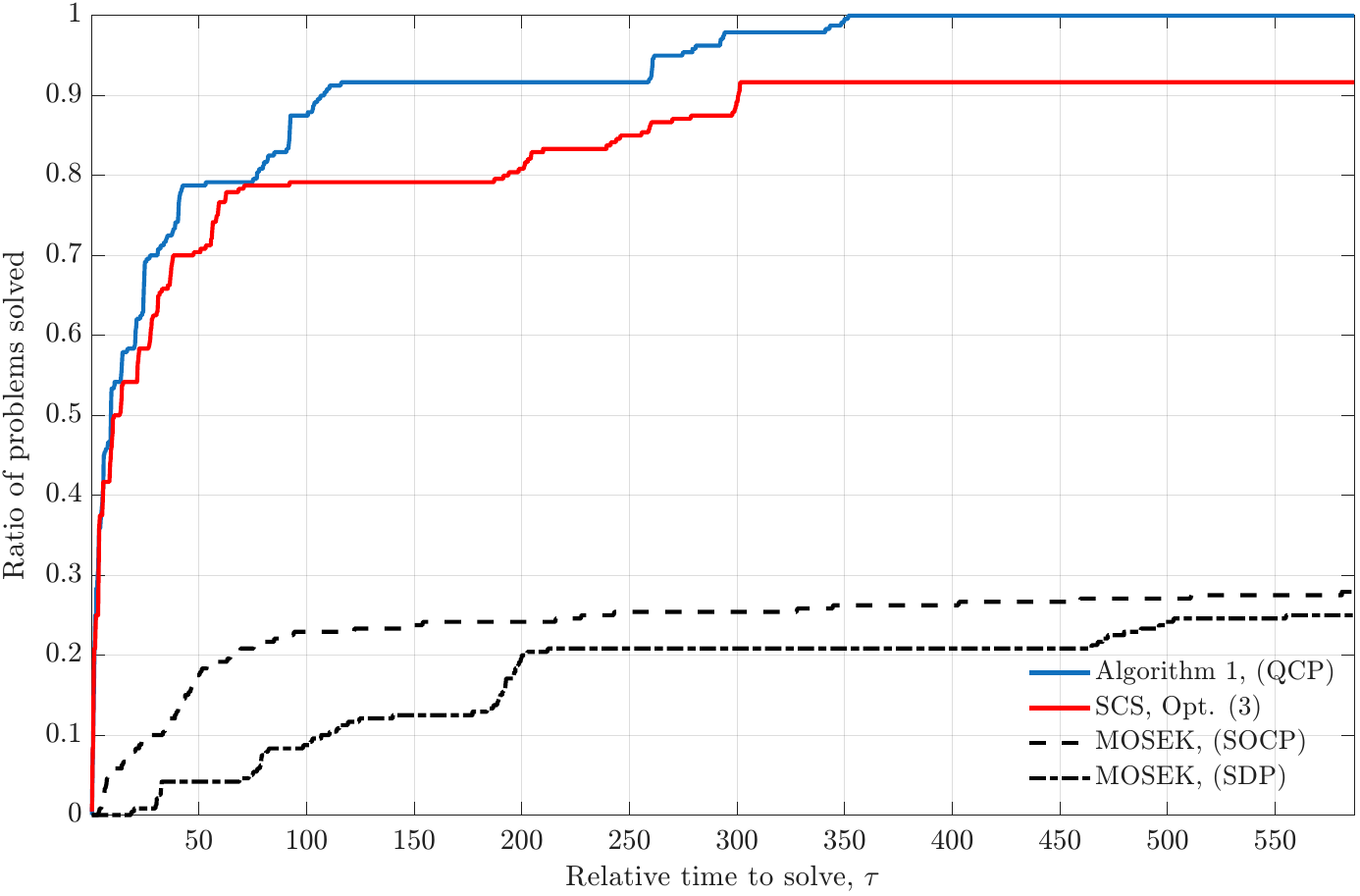}
    \caption{Relative performance profiles to capture the behavior of solvers in different scale problems. The fraction of problems solved is computed for each multiple $\tau$ of the best solve time.}
    \label{fig:ex1}
\end{figure}

\section{Conclusion}
\label{sec:conclusion}

This work introduced a lifting-free regularization method for quadratic Sum-Of-Squares (QSOS) programs. By adding a norm penalty to SOS-constrained variables, the method preserves the original conic structure and yields closed-form primal updates, reducing the problem to an unconstrained dual with Lipschitz-continuous gradient. The dual can be efficiently maximized using accelerated first-order methods, and the theoretical analysis establishes concavity, differentiability, smoothness, and non-asymptotic recovery of the original solution of \cref{opt:QCP}. \cref{thm:feasibility} guarantees that the primal feasibility residual vanishes at the rate $\mathcal{O}(1/N)$, and \cref{thm:suboptimality-fixed-rho} gives a complete bound on the suboptimality of the recovered primal solution in terms of the number of iterations, the regularization parameter $\rho$, and the distance to the optimal dual point. These guarantees are unconditional and hold for all problems without any additional assumptions.

Numerical experiments on constrained regression problems demonstrate that the method solves all instances, handles larger problems than interior-point reformulations, and can achieve faster performance than SCS and MOSEK with memory scaling only in the number of equality constraints. These results show that lifting-free regularization provides a practical and scalable alternative for large-scale quadratic SOS optimization.

\textbf{Future Work.} Although the current implementation of \cref{alg:first-order-solver} already demonstrates competitive performance, several improvements could further enhance its efficiency and robustness. The dual problem can be solved using alternative first-order methods that may converge in significantly fewer iterations, especially for ill-conditioned problems. A direct implementation of the algorithm in a compiled language would eliminate the overhead of the MATLAB interpreter and likely improve runtime. Future work could also systematically investigate the use of adaptive regularization (varying $\rho$ during the solve) and warm-starting strategies to handle even larger and more difficult problems. Finally, the robustness observed in the benchmarks suggests that the method is well suited for problems where other solvers fail. The present work thus provides a solid foundation for a family of simple, memory-efficient, and robust solvers.





\bibliographystyle{siamplain}
\bibliography{references}

\end{document}

%% file: ex_shared.tex
\usepackage{amsmath}      
\usepackage{amssymb}      
\usepackage{amsfonts}     
\usepackage{mathtools}    
\usepackage{graphicx}     
\usepackage{bm}           
\usepackage{booktabs}     
\usepackage{algorithm}    
\usepackage{algpseudocode}
\usepackage{amsopn}       

\ifpdf
  \DeclareGraphicsExtensions{.pdf,.png,.jpg}
\else
  \DeclareGraphicsExtensions{.eps}
\fi

\DeclareMathOperator*{\argmin}{arg\,min}

\newsiamremark{remark}{Remark}
\newsiamremark{example}{Example}
\newsiamremark{assumption}{Assumption}

\headers{Lifting-Free Quadratic Sum-Of-Squares Programming}
        {Gabriel F. Machado, Ross Drummond, and Morgan Jones}

\title{Lifting-Free Quadratic Sum-Of-Squares Programming%
\thanks{Submitted to the editors DATE.}}

\author{
Gabriel F. Machado
\and
Ross Drummond
\and
Morgan Jones
\thanks{School of Electrical and Electronic Engineering, University of Sheffield, Amy Johnson Building, Mappin Street, Sheffield, S1 3JD, UK
(\email{\{gfmachado1, ross.drummond, morgan.jones\}@sheffield.ac.uk}).}
}